\title{\bf Well-posedness for the generalized Benjamin-Ono equations with arbitrary large
initial data in the critical space
 }
\author{St\'ephane Vento, \\Universit\'e Paris-Est, \\Laboratoire d'Analyse
et de Math\'ematiques Appliqu\'ees,\\ 5 bd. Descartes, Cit\'e
Descartes, Champs-Sur-Marne,\\ 77454 Marne-La-Vall\'ee Cedex 2,
France}
\date{E-mail:\, stephane.vento@univ-paris-est.fr}
\numberwithin{equation}{section}
\newtheorem{theorem}{Theorem}[section]
\newtheorem{lemma}{Lemma}[section]
\newtheorem{proposition}{Proposition}[section]
\newtheorem{definition}{Definition}[section]
\newtheorem{remark}{Remark}[section]
\def\R{\mathbb{R}}
\def\Z{\mathbb{Z}}
\def\C{\mathcal{C}}
\def\S{\mathcal{S}}
\def\H{\mathcal{H}}
\def\N{\mathcal{N}}
\def\F{\mathcal{F}}
\def\eps{\varepsilon}
\def\supp{\mathop{\rm supp}\nolimits}
\def\sgn{\mathop{\rm sgn}\nolimits}
\def\B{\dot{\mathcal{B}}}
\def\X{{X^s}}
\def\dX{{\dot{X}^{s}}}
\begin{document}
\maketitle

\noindent {\bf Abstract.}\, We prove that the generalized Benjamin-Ono equations $\partial_tu+\mathcal{H}\partial_x^2u\pm u^k\partial_xu=0$, $k\geq 4$
are locally well-posed in the scaling invariant  spaces $\dot{H}^{s_k}(\R)$ where $s_k=1/2-1/k$. Our results also hold in the non-homogeneous spaces
$H^{s_k}(\R)$. In the case $k=3$, local well-posedness is obtained in $H^{s}(\R)$, $s>1/3$.
\\

\noindent
{\bf Keywords:} NLS-like equations, Cauchy problem\\
{\bf AMS Classification:} 35Q55, 35B30, 76B03, 76B55

\section{Introduction}

In this paper we pursue our study of the Cauchy problem for the  generalized Benjamin-Ono equations
\begin{equation}\label{gBO}\tag{gBO}\left\{\begin{array}{ll}\partial_tu+\mathcal{H}\partial_x^2u\pm u^k\partial_xu=0,
\quad x,t\in\mathbb{R},\\u(x,t=0)=u_0(x),\quad
x\in\mathbb{R},\end{array}\right.\end{equation}
with $k$ an integer $\geq 3$ and with $\H$ the Hilbert transform defined \textit{via} the Fourier transform by
\begin{equation}\label{hilbert}\H f=\F^{-1}(-i\sgn(\xi)\hat{f}(\xi)),\quad f\in\S'(\R).\end{equation}

The Hilbert transform is a real operator, and consequently we look for real-valued solutions.  In view of (\ref{hilbert}), we see that $\H$ is nothing but $-i$ on positive frequencies
and $+i$ on negative ones. A very  close equation to (\ref{gBO}) is then the derivative nonlinear Schr\"{o}dinger equation
\begin{equation}\label{nls}\partial_tu-i\partial_x^2u\pm u^k\partial_xu=0.\end{equation}
for which all our results remain true. Furthermore, (\ref{gBO}) and (\ref{nls}) enjoy the same linear estimates, see Section \ref{sec-lin}.

\vskip 0.5cm

A remarkable feature of (\ref{gBO}) is the following scaling invariance: if $u(t,x)$ is a solution of the equation on $[-T,+T]$, then for any $\lambda>0$,
$u_\lambda(t,x)=\lambda^{1/k}u(\lambda^2t,\lambda x)$
also solves (\ref{gBO}) on $[-\lambda^{-2}T,+\lambda^{-2}T]$ with initial data
$u_\lambda(0,x)$  and moreover
\[\|u_\lambda(\cdot,0)\|_{\dot{H}^s}=\lambda^{s+\frac{1}{k}-\frac{1}{2}}\|u(\cdot,0)\|_{\dot{H}^s}.\]
Hence the $\dot{H}^s(\R)$ norm is invariant if and only if
$s=s_k=1/2-1/k$ and we may expect well-posedness in $\dot{H}^{s_k}(\R)$.

\vskip 0.5cm

When $k=1$, (\ref{gBO}) is the ordinary Benjamin-Ono equation
derived by Benjamin \cite{1967JFM....29..559B} and later by  Ono
\cite{MR0398275} as a model for one-dimensional waves in deep
water. The Cauchy problem for the Benjamin-Ono equation has been
extensively studied these last years, see
\cite{MR533234,MR1097916,MR847994}. In \cite{MR2052470}, Tao introduced a gauge transformation
 (a kind of Cole-Hopf transformation)
which ameliorate the derivative nonlinearity, and
 get the well-posedness of this equation in $H^s(\R)$,
$s\geq 1$. Recently, combining a
gauge transformation together with a Bourgain's method, Ionescu
and Kenig \cite{MR2291918} shown that one could go down to
$L^2(\R)$, which seems to be the critical space for the Benjamin-Ono equation. Note also that Burq and Planchon \cite{MR2357995}
have obtained well-posedness in $H^s(\R)$, $s>1/4$ by similar methods. It is
worth noticing that all these results have been obtained by
compactness methods. On the other hand,  Molinet, Saut and
 Tzvetkov \cite{MR1885293}  proved that, for all $s\in\R$, the
flow map $u_0\mapsto u$ is not of class $\mathcal{C}^2$ from
$H^s(\R)$ to $H^s(\R)$. Furthermore, building suitable families of
approximate solutions,   Koch and  Tzvetkov   proved in
\cite{MR2172940} that the flow map is actually not even uniformly
continuous on bounded sets of $H^s(\R)$, $s>0$.  This explains why a Picard iteration scheme
fails to solve the Benjamin-Ono equation in Sobolev spaces.

\vskip 0.5cm

In the case of the modified Benjamin-Ono equation ($k=2$),
Kenig and  Takaoka \cite{MR2219229} have recently obtained the
global well-posedness in the energy space $H^{1/2}(\R)$. This have
been proved thanks to a localized gauge transformation combined
with a space-time $L^2$ estimate of the solution. It is important to note that this result is far from that given by the scaling index $s_2=0$.
However,  it is known
to be sharp since the solution map $u_0\mapsto u$ is not
$\mathcal{C}^3$ in $H^s(\R)$ as soon as $s<1/2$ (see \cite{MR2038121}).

\vskip 0.5cm

In the case $k=3$, the local
well-posedness is known  in $H^s(\R)$, $s>1/3$ for small initial
data \cite{MR2038121} but only in $H^s(\mathbb{R})$, $s>3/4$ for
large initial data. In \cite{vento-2007}, we showed that (\ref{gBO}) is $\C^4$-ill-posed in $H^s(\R)$, $s<1/3$,
 in the sense that the flow-map $u_0\mapsto u$
fails to be $\C^4$. We prove here that well-posedness occurs in $H^s(\R)$, $s>1/3$, and without smallness
assumption on the initial data.
\vskip 0.5cm

Concerning the case $k\geq 4$, global well-posedness in $H^s(\R)$, $s>s_k$ was derived for small initial data
by Molinet and Ribaud in \cite{MR2038121}. Later, by means of a gauge transformation, the same authors \cite{MR2101982} removed the size restriction on the data and showed well-posedness
in $H^{1/2}(\R)$, whatever the value of $k$. By a refinement of their method, we reached in \cite{vento-2007} the well-posedness in $H^s(\R)$, $s>s_k$,
but for high nonlinearities only ($k\geq 12$ in fact). On the other hand, in the particular case $k=4$, Burq and Planchon \cite{MR2227135} proved the local well-posedness
in the critical space $\dot{H}^{1/4}(\R)$. Inspired by their works, we extend in this paper the well-posedness to $\dot{H}^{s_k}(\R)$ for any $k\geq 4$, and
our method is flexible enough to get the result in the non-homogeneous space
$H^{s_k}(\R)$. A standard fixed point argument allows us to construct a unique solution in a subspace of $\dot{H}^{s_k}(\R)$ with a continuous flow-map $u_0\mapsto u$.
Recall that Biagioni and Linares \cite{MR1837253} proved using solitary waves, that this map cannot be uniformly continuous in $\dot{H}^{s_k}(\R)$.
In the surcritical case $s<s_k$, we also know that the solution-map (if it exists) fails to be $\C^{k+1}$ in $H^s(\R)$, see \cite{MR2038121}.

\section{Notations and main results}
\subsection{Notations}
For $A$ and $B$ two positive numbers, we write $A\lesssim B$ if it exists $c>0$ such that $A\leq cB$. Similarly define $A\gtrsim B$, $A\sim B$
if $A\geq cB$ and $A\lesssim B\lesssim A$ respectively. When the constant $c$ is large enough, we write $A\ll B$. For any $f\in\S'(\R)$, we use
$\F f$ or $\hat{f}$ to denote its Fourier transform. For $1\leq p\leq\infty$, $L^p$ is the standard Lebesgue space and its space-time versions
$L^p_xL^q_T$ and $L^q_TL^p_x$ ($T>0$) are endowed with the norms
$$\|f\|_{L^p_xL^q_T}=\big\|\|f\|_{L^q_t([-T;T])}\big\|_{L^p_x(\R)}\ \textrm{ and
}\
\|f\|_{L^q_TL^p_x}=\big\|\|f\|_{L^p_x(\R)}\big\|_{L^q_t([-T;T])}.$$
The
pseudo-differential operator $D^\alpha_x$ is defined by its
Fourier symbol $|\xi|^\alpha$. We will denote by $P_+$ and $P_-$ the
projection on respectively the positive and the negative spatial Fourier modes. Thus one has
$$i\mathcal{H}=P_+-P_-.$$
Let $\eta\in\mathcal{C}_0^\infty(\R)$, $\eta\geq 0$, $\supp\
\eta\subset\{1/2\leq |\xi|\leq 2\}$ with
$\sum_{-\infty}^\infty\eta(2^{-j}\xi)=1$ for $\xi\neq 0$. We set
$p(\xi)=\sum_{j\leq -3}\eta(2^{-j}\xi)$ and consider, for all
$j\in\Z$, the operator $Q_j$ defined by
\[Q_j(f)=\mathcal{F}^{-1}(\eta(2^{-j}\xi)\hat{f}(\xi)).\]
We adopt the following summation convention. Any summation of the form $r\lesssim j$, $r\gg j$,...
is a sum over the $r\in\Z$ such that $2^r\lesssim 2^j$..., thus for instance $\sum_{r\lesssim j}=\sum_{r : 2^r\lesssim 2^j}$.
We define then the operators  $Q_{\lesssim j}=\sum_{r\lesssim j}Q_r$, $Q_{\ll
j}=\sum_{r\ll j}Q_r$, etc. For $1\leq p,q, r\leq \infty$ and $s\in\R$, let $\B^{s,r}_p(L^q_T)$ be the homogeneous  Besov space equipped with the norm
$$\|f\|_{\B^{s,r}_p(L^q_T)} = \Big(\sum_{j\in\Z}[2^{js}\|Q_jf\|_{L^p_xL^q_T}]^r\Big)^{1/r}.$$
Finally for $s\in\R$ and $\theta\in[0,1]$, we define the solution space $\dot{\S}^{s,\theta}$ (where lives our solution $u$) and the nonlinear space
$\dot{\N}^{s,\theta}$ (where lives the nonlinear term $u^k\partial_xu$) by
$$\dot{\S}^{s,\theta}=\B^{s+\frac{3\theta-1}{4},2}_{\frac{4}{1-\theta}}(L^{\frac{2}{\theta}}_T),\quad
\dot{\N}^{s,\theta}=\B^{s+\frac{1-3\theta}{4},2}_{\frac{4}{3+\theta}}(L^{\frac{2}{2-\theta}}_T).$$

\subsection{Main results}
We first state our well-posedness results in the case $k\geq 4$.
\begin{theorem}\label{th-hom} Let $k\geq 4$ and $u_0\in \dot{H}^{s_k}(\R)$. There
exists $T=T(u_0)>0$ and a unique solution $u$ of (\ref{gBO}) such
that $u\in \dot{Z}_T$ with
$$\dot{Z}_T=\C([-T,+T],\dot{H}^{s_k}(\R))\cap \dot{X}^{s_k}\cap L^k_xL^\infty_T.$$
Moreover, the flow map $u_0\mapsto u$ is locally Lipschitz from $\dot{H}^{s_k}(\R)$ to $\dot{Z}_T$.
\end{theorem}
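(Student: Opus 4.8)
The plan is to solve (\ref{gBO}) by a contraction argument applied to the Duhamel operator
$$\Phi(u)(t)=W(t)u_0\mp\int_0^tW(t-t')\big(u^k\partial_xu\big)(t')\,dt',\qquad W(t)=e^{-t\H\partial_x^2},$$
on a suitable ball of $\dot Z_T$. The analytic inputs are the linear estimates of Section \ref{sec-lin}, which I would use in the homogeneous, \emph{globally-in-time} form
$$\|W(\cdot)u_0\|_{\dot{\S}^{s_k,\theta}}\lesssim\|u_0\|_{\dot H^{s_k}},\qquad \Big\|\int_0^tW(t-t')F(t')\,dt'\Big\|_{\dot{\S}^{s_k,\theta}}\lesssim\|F\|_{\dot{\N}^{s_k,\theta}},$$
uniformly in $\theta\in[0,1]$, together with the elementary Besov embeddings of the endpoints of the family $\dot{\S}^{s_k,\theta}$ into $\C([-T,T],\dot H^{s_k})$ and, at $\theta=0$, into $L^k_xL^\infty_T$ (this last one is exactly the Sobolev relation $1/4-1/k=s_k-1/4$). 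These reduce the whole construction to a single multilinear estimate controlling $u^k\partial_xu$ in the nonlinear space $\dot{\N}^{s_k,\theta}$.

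The heart of the matter is therefore to prove an estimate of the shape
$$\|u^k\partial_xu\|_{\dot{\N}^{s_k,\theta}}\lesssim\prod_{i=0}^{k}\|u\|_{\dot{\S}^{s_k,\theta_i}},$$
where the exponents $\theta_i\in[0,1]$ are chosen so that the underlying spatial and temporal Hölder relations close. After a Littlewood--Paley decomposition of the $k+1$ factors and a paraproduct splitting into low--high, high--low and high--high interactions, the scheme is to place the \emph{differentiated} factor in the smoothing endpoint $\theta_i=1$ (the space $\dot B^{s_k+1/2,2}_\infty(L^2_T)=L^\infty_xL^2_T$ at regularity $s_k+1/2$), so that the loss of one derivative is compensated by the half-derivative gain of the local smoothing effect, while the undifferentiated factors are distributed among the remaining Strichartz norms. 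The value $s_k=1/2-1/k$ is precisely what balances the powers of the frequencies at the critical level, and the $\ell^2$ summation over the output frequency is carried out using the Besov structure of $\dot{\N}^{s_k,\theta}$ together with Schur's lemma on the off-diagonal high--high pieces.

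The main obstacle is that $\dot Z_T$ is scaling invariant, so no power of $T$ can be extracted from the full norm and a naive contraction would force $\|u_0\|_{\dot H^{s_k}}$ to be small; in particular the two non-shrinking quantities $\|W(\cdot)u_0\|_{L^\infty_T\dot H^{s_k}}$ and $\|W(\cdot)u_0\|_{L^k_xL^\infty_T}$ (the latter only bounded, via $\dot H^{s_k}\hookrightarrow L^k$, by the possibly large $\|u_0\|_{\dot H^{s_k}}$) cannot be made small. The resolution, in the spirit of Burq--Planchon, is to extract all the smallness from the \emph{intermediate} Strichartz norms $\dot{\S}^{s_k,\theta}$ with $0<\theta<1$: these have a finite time exponent $L^{2/\theta}_T$, so by the global linear estimate above their restriction to $[-T,T]$ tends to $0$ as $T\to0$, for a fixed $u_0$, by dominated convergence. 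The multilinear estimate is then arranged so that every product term carries at least one such shrinking factor, whereas the non-shrinking endpoints $\theta\in\{0,1\}$ enter only with bounded size; this is exactly the flexibility that motivates introducing the whole family $\dot{\S}^{s,\theta}$ rather than a single norm. Choosing $T=T(u_0)$ small enough that these intermediate norms of $W(\cdot)u_0$ are small makes $\Phi$ map a fixed ball of $\dot Z_T$ into itself.

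Finally, running the same multilinear estimate on the difference $u^k\partial_xu-v^k\partial_xv$, expanded as a telescoping sum so that each summand again contains exactly one differentiated factor and at least one shrinking factor, yields
$$\|\Phi(u)-\Phi(v)\|_{\dot Z_T}\le\tfrac12\|u-v\|_{\dot Z_T},$$
which gives the unique fixed point and, by the analogous estimate for two distinct data, the local Lipschitz continuity of $u_0\mapsto u$ from $\dot H^{s_k}$ into $\dot Z_T$. Persistence in $\C([-T,T],\dot H^{s_k})$ follows from the corresponding embedding of $\dot{\S}^{s_k,\theta}$, and uniqueness in the full class $\dot Z_T$ is inherited from the contraction on the ball after a standard continuity-in-$T$ argument.
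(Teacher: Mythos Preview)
Your plan has a genuine gap at the most delicate point: the low--high paraproduct piece
\[
\pi(u,u)=\sum_j\partial_xQ_j\big((Q_{\ll j}u)^kQ_{\sim j}u\big)
\]
cannot be made small by your mechanism when $\|u_0\|_{\dot H^{s_k}}$ is large. In this interaction the full derivative sits on the high-frequency factor $Q_{\sim j}u$, and the only way to absorb it is to pair the Kato smoothing input ($\theta=1$ on $Q_{\sim j}u$) with the dual smoothing on the output ($\dot{\N}^{s_k,1}$). That forces all $k$ low-frequency factors into $L^k_xL^\infty_T$, giving
\[
\|\pi(u,u)\|_{\dot{\N}^{s_k,1}}\lesssim \|u\|_{L^k_xL^\infty_T}^{\,k}\,\|u\|_{\dot X^{s_k}}.
\]
There is indeed one ``shrinking'' factor here, but it appears \emph{linearly} in the small quantity $\|u\|_{\dot X^{s_k}}$ with coefficient $\|u\|_{L^k_xL^\infty_T}^{k}\sim\|u_0\|_{\dot H^{s_k}}^{k}$, which is large. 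Consequently $\Phi$ maps the $\delta$-ball of $\dot X^{s_k}$ into a ball of radius $\gtrsim C\|u_0\|_{\dot H^{s_k}}^{k}\,\delta$, and the difference estimate likewise gives a Lipschitz constant $\gtrsim C\|u_0\|_{\dot H^{s_k}}^{k}$. No choice of $T$ helps: the shrinking factor is already used, and the low-frequency factors at $\ll j$ cannot be upgraded to an intermediate $\theta$ without leaving an unabsorbed positive power of $2^j$ (they carry no $j$-decay). This is exactly the obstruction the paper flags in the introduction and is why the direct contraction only works for small data (cf.\ \cite{MR2038121}).

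The paper's remedy, which your sketch omits entirely, is to remove this bad resonance by moving $\pi(u_L,u)$ (with $u_L=V(t)u_0$) to the linear side and treating it via the localized gauge transform $w_j=e^{i\int^x b_{\ll j}}P_+Q_ju$, $b_{\ll j}=\tfrac12(Q_{\ll j}u_0)^k$. This yields linear estimates for the modified propagator $U(t)$ (Proposition~\ref{prop-estlin}), after which the fixed point is run on $F(u)=U(t)u_0-\int_0^tU(t-t')f\,dt'$ with $f=\pi(u_L,u)-\pi(u,u)+g$. The crucial gain is that $\pi(u_L,u)-\pi(u,u)$ carries the small factor $\|u_L-u\|_{L^k_xL^\infty_T}$ (made small by Lemma~\ref{lemsmall} and the choice of the ball $B_M$), while $g$ is genuinely quadratic in the shrinking norm $\|u\|_{\dot X^{s_k}}$. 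Without this gauge step your contraction does not close for arbitrary data. (A side remark: $\theta=1$ is in fact a shrinking norm since it carries $L^2_T$; the only non-shrinking components are $L^k_xL^\infty_T$ and $L^\infty_T\dot H^{s_k}$.)
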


\vskip 0.3cm

In the non-homogeneous case, one has the following result.

\begin{theorem}\label{th-nohom} Let $k\geq 4$ and $u_0\in {H}^{s}(\R)$, $s\geq s_k$. There
exists $T=T(u_0)>0$ and a unique solution $u$ of (\ref{gBO}) such
that $u\in Z_T$ with
$$Z_T=\C([-T,+T],{H}^{s}(\R))\cap \X\cap L^k_xL^\infty_T.$$
Moreover, the flow map $u_0\mapsto u$ is locally Lipschitz from $H^s(\R)$ to $Z_T$.
\end{theorem}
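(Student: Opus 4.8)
The proof parallels that of Theorem~\ref{th-hom}, carried out in the inhomogeneous spaces. Since $s\ge s_k>0$ one has $H^s(\R)\hookrightarrow\dot H^{s_k}(\R)$, so Theorem~\ref{th-hom} already provides a solution $u\in\dot{Z}_T$ for some $T=T(u_0)>0$; what remains is to show that this solution in fact lies in $Z_T$ and that the flow map is locally Lipschitz into $Z_T$. I obtain this by running the same fixed-point scheme with $\cro{\xi}$-weights. Writing the nonlinearity as $u^k\partial_xu=\frac1{k+1}\partial_x(u^{k+1})$ and denoting by $U(t)=e^{-t\H\partial_x^2}$ the free group, I recast (\ref{gBO}) as
\[
u=\Phi(u):=U(t)u_0\mp\frac1{k+1}\int_0^tU(t-t')\,\partial_x\big(u^{k+1}\big)(t')\,dt',
\]
and look for a fixed point in a ball of the inhomogeneous resolution space $\X$, working throughout with the $\cro{\xi}$-weighted analogues $\S^{s,\theta}$, $\N^{s,\theta}$ of $\dot{\S}^{s,\theta}$, $\dot{\N}^{s,\theta}$ (so that the blocks $Q_{\lesssim0}$ carry the weight $\cro{2^j}^s\sim1$ instead of $2^{js}$).

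The two ingredients are the linear and the multilinear estimates in inhomogeneous form. From the linear theory of Section~\ref{sec-lin} I would record the free-evolution bound $\|U(\cdot)u_0\|_{\X\cap L^k_xL^\infty_T}\lesssim\|u_0\|_{H^s}$ together with the retarded estimate $\|\int_0^tU(t-t')F(t')\,dt'\|_{\X}\lesssim\|F\|_{\N^{s}}$; these pass from the homogeneous to the inhomogeneous Besov scale once the Strichartz, maximal-function and local-smoothing estimates for $U$ are summed in Littlewood-Paley, the only new contributions being the finitely many low-frequency blocks. The decisive ingredient is the symmetric multilinear estimate
\[
\big\|\partial_x(v_1\cdots v_{k+1})\big\|_{\N^{s}}\lesssim\sum_{i=1}^{k+1}\|v_i\|_{\X}\prod_{j\ne i}\|v_j\|_{\X},
\]
together with its difference counterpart, which controls $\Phi(u)-\Phi(\tilde u)$.

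To prove the multilinear estimate I would decompose output and factors dyadically and split according to whether the largest frequency involved is $\gtrsim1$ or $\ll1$. In the high-frequency regime $\cro{2^j}^s\sim2^{js}$, so the estimate is exactly the scaling-critical bound already used for Theorem~\ref{th-hom}: one assigns the derivative to the highest-frequency factor, measures that factor in a local-smoothing norm, the remaining $k$ factors in $L^k_xL^\infty_T$, and recovers the $\ell^2$-summability of the Littlewood-Paley series from the fact that the two top frequencies must be comparable. In the low-frequency regime $\cro{2^j}^s\sim1$ and the interaction is subcritical: the external derivative supplies a gain $\sim2^j\ll1$ at the low output frequency, which restores summability and leaves room to spare. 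The genuine difficulty lies entirely in the high-frequency critical estimate --- the frequency bookkeeping of the high$\times$high$\to$high interactions and the uniform control of the summation over the parameter $\theta$ entering $\X$ and $\N^{s}$; the inhomogeneous modification adds only the routine, and strictly favorable, low-frequency analysis.

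Finally, combining the two bounds on the ball $\{\|u\|_{\X}\le2\delta\}$ with $\delta:=\|U(\cdot)u_0\|_{\X}$ (norm taken on $[-T,T]$) gives $\|\Phi(u)\|_{\X}\le\delta+C\delta^{k+1}$ and $\|\Phi(u)-\Phi(\tilde u)\|_{\X}\le C\delta^{k}\|u-\tilde u\|_{\X}$. Since the time-Lebesgue exponents defining $\X$ are finite, $\delta\to0$ as $T\to0$ for each fixed $u_0$ by dominated convergence (the global Strichartz norm being finite for $u_0\in H^s$); hence for $T=T(u_0)$ small enough $\Phi$ maps the ball into itself and is a contraction there, and its unique fixed point coincides with the solution furnished above. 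Membership in $L^k_xL^\infty_T$ and continuity with values in $H^s(\R)$ follow from the linear estimates applied to $u=\Phi(u)$, while the difference estimate yields the local Lipschitz continuity of $u_0\mapsto u$ from $H^s(\R)$ into $Z_T$.
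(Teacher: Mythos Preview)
Your proposal has a genuine gap: it bypasses the central difficulty of the paper by asserting a multilinear estimate that does not hold, and consequently misses the renormalization/gauge step that the paper's proof of Theorem~\ref{th-hom} (and hence of Theorem~\ref{th-nohom}) actually rests on.

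Concretely, you claim
\[
\big\|\partial_x(v_1\cdots v_{k+1})\big\|_{\N^{s}}\lesssim\sum_{i=1}^{k+1}\|v_i\|_{\X}\prod_{j\ne i}\|v_j\|_{\X},
\]
and then close the contraction using only that $\|V(\cdot)u_0\|_{\X}\to0$ as $T\to0$. But look at the low--high paraproduct $\pi(u,u)=\sum_j\partial_xQ_j\big((Q_{\ll j}u)^kQ_{\sim j}u\big)$: in the $\N^{s,1}=\B^{s-1/2,2}_1(L^2_T)$ norm one must control $2^{j(s+1/2)}\|(Q_{\ll j}u)^kQ_{\sim j}u\|_{L^1_xL^2_T}$. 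Placing the high-frequency factor in the local-smoothing norm $\dot{\S}^{s,1}=\B^{s+1/2,2}_\infty(L^2_T)$ absorbs the full weight $2^{j(s+1/2)}$ but forces the remaining $k$ low-frequency factors into $L^k_xL^\infty_T$; any placement of $Q_{\sim j}u$ in $\dot{\S}^{s,\theta}$ with $\theta<1$ leaves an unabsorbed factor $2^{3j(1-\theta)/4}$. Since $\X=\dot{\S}^{s,\eps}\cap\dot{\S}^{s,1}$ contains no $L^\infty_T$ component (this is precisely the feature you exploit for smallness), $\X$ does \emph{not} control $L^k_xL^\infty_T$, and your multilinear bound fails for this term. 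Your own description (``the remaining $k$ factors in $L^k_xL^\infty_T$'') is in fact the correct bookkeeping, but it is inconsistent with the displayed estimate.

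If instead you enlarge the resolution norm to include $L^k_xL^\infty_T$, the multilinear estimate closes but your smallness argument collapses: $\|V(t)u_0\|_{L^k_xL^\infty_T}$ does not tend to $0$ as $T\to0$ for large $u_0$. This is exactly the obstruction discussed in the introduction, and it is why the paper does \emph{not} run a direct fixed point on the free Duhamel formula. Instead it moves $\pi(V(t)u_0,u)$ to the linear side, solves the resulting variable-coefficient equation via the localized gauge $w_j=e^{i\int^x b_{\ll j}}P_+Q_ju$ (Lemma~\ref{lem-vj}, Proposition~\ref{prop-estlin}), and contracts in the ball $B_M\cap B_S$ centred at $u_0$ in $L^k_xL^\infty_T$, using Lemma~\ref{lemsmall} for smallness. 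The inhomogeneous Theorem~\ref{th-nohom} is then obtained by applying Propositions~\ref{prop-estlin}--\ref{prop-estnl} simultaneously at regularities $s$ and $0$, with $\X=\dot X^0\cap\dot X^s$ and the embedding $\N^{s,1}\hookrightarrow\dot\N^{s_k,1}$; the low-frequency analysis is indeed routine, but only \emph{after} the gauge step has removed the bad paraproduct.
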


\begin{remark} We only obtain the Lipschitz continuity of
the map $u_0\mapsto u$ in Theorems \ref{th-hom} and \ref{th-nohom} in $\dot{H}^{s_k}(\R)$ (resp. $H^s(\R)$).
As noticed in the introduction, the solution map given by Theorem \ref{th-hom} is not uniformly continuous from $\dot{H}^{s_k}(\R)$ to
$\C([-T,T],\dot{H}^{s_k}(\R))$. Moreover, when $s<s_k$, the flow map in Theorem \ref{th-nohom}  is no longer of class $\C^{k+1}$ in $H^s(\R)$.
It is not clear wether the map given by Theorems \ref{th-hom} and \ref{th-nohom} is $\C^{k+1}$ or not.
\end{remark}

\begin{remark} The spaces $\dot{X}^{s_k}$ and $X^s$ will be defined in Section \ref{sec-lin} and are directly related with the linear estimates
for the linear Benjamin-Ono equation.
\end{remark}

\vskip 0.3cm

The main tools to prove Theorems \ref{th-hom} and \ref{th-nohom} are the sharp Kato smoothing effect and the maximal in time
inequality for the free solution $V(t)u_0$ where $V(t)=e^{it\H\partial_x^2}$. Recall that for regular solutions, (\ref{gBO}) is equivalent to its integral
formulation
\begin{equation}\label{eq-int}u(t)=V(t)u_0\mp\int_0^tV(t-t')(u^k(t')\partial_xu(t'))dt'.\end{equation}
It is worth noticing that (\ref{gBO}) provides a perfect balance between the derivative nonlinear term on one hand, and the available linear estimates on the other hand.
Heuristically, one may use (\ref{eq-int}) to write
\begin{align*}
\|D_x^{s_k+1/2}u\|_{L^\infty_xL^2_T}+\|u\|_{L^k_xL^\infty_T} &\lesssim \|u_0\|_{\dot{H}^{s_k}}+\|D_x^{s_k-1/2}\partial_x(u^{k+1})\|_{L^1_xL^2_T}\\
&\lesssim \|u_0\|_{\dot{H}^{s_k}}+\|D_x^{s_k+1/2}u\|_{L^\infty_xL^2_T}\|u\|_{L^k_xL^\infty_T}^k
\end{align*}
and perform a fixed point procedure. Unfortunately, such an argument fails for several reasons:
\begin{itemize}
\item First, it is not clear wether the second inequality holds true or not. Indeed, we used the fractional Leibniz rule (see the Appendix in
\cite{MR1211741}, \cite{MR2101982})  at the end points $L^p$, $p=1,\infty$. However, this inequality becomes true if
one works in the associated Besov spaces $\B^{s_k+1/2,2}_\infty(L^2_T)\cap \B^{0,2}_k(L^\infty_T)$ and provides
sharp well-posedness for small initial data, see \cite{MR2038121}.
\item The term $\|V(t)u_0\|_{L^k_xL^\infty_T}$ will be small only if $\|u_0\|_{\dot{H}^{s_k}}$ is small as well, even for small $T$.
Nevertheless, as noticed in \cite{MR2227135}, if we consider instead the difference $V(t)u_0-u_0$, then its $L^k_xL^\infty_T$-norm is
small provided we restrict ourselves to a small interval $[-T,T]$ (see Lemma \ref{lemsmall}).
\item We also need to get a better share of the derivative in the nonlinear term. By a standard paraproduct decomposition, we see that
the worst contribution in $\partial_xu^{k+1}$ is given by $\pi(u,u)$ where
$$\pi(f,g)=\sum_j\partial_xQ_j((Q_{\ll j}f)^kQ_{\sim j}g).$$
The main idea is then to inject this term (or more precisely $\pi(V(t)u_0,u)$)
 in the linear part of the equation to get the variable-coefficient Schr\"{o}dinger equation
\begin{equation}\label{gBO2}\partial_tu+\H\partial_x^2u+\pi(V(t)u_0,u)=f\end{equation}
where $f$ will be a well-behaved term. Linear estimates for equation (\ref{gBO2}) are obtained by the localized gauge transform
$$w_j=e^{\frac i2\int_{-\infty}^x(Q_{\ll j}u_0)^k}P_+Q_ju,\quad j\in\Z.$$
\end{itemize}

\vskip 0.5cm

Now we turn to the case $k=3$. By similar considerations, we obtain the following result.
\begin{theorem}\label{th-keq3} Let $k= 3$ and $u_0\in {H}^{s}(\R)$, $s>1/3$. There
exists $T=T(u_0)>0$ and a unique solution $u$ of (\ref{gBO}) such
that $u\in Z_T$ with
$$Z_T=\C([-T,+T],{H}^{s}(\R))\cap \X\cap L^3_xL^\infty_T.$$
Moreover, the flow map $u_0\mapsto u$ is locally Lipschitz from $H^s(\R)$ to $Z_T$.
\end{theorem}

This paper is organized as follows. In Section \ref{sec-lin}, we recall some sharp estimates related with the linear operator $V(t)$, and we derive linear estimates for equation
(\ref{gBO2}). Section \ref{sec-kgeq4} is devoted to the case $k\geq 4$. Finally, we prove Theorem \ref{th-keq3} in Section \ref{sec-keq3}.

\section{Linear estimates}\label{sec-lin}
\subsection{Estimates for the linear BO equation}
This section deals with the well-known linear estimates for the Benjamin-Ono equation. Note that all results
stated here hold as well for the Schr\"{o}dinger operator $S(t)=e^{it\partial_x^2}$.

The following lemma summarizes the main estimates related to the group $V(t)$. See for instance \cite{MR1101221,MR1086966} for the proof.
\begin{lemma}\label{lem-estlin} Let $\varphi\in\mathcal{S}(\R)$, then
\begin{eqnarray}\label{est0}\|V(t)\varphi\|_{L^\infty_TL^2_x} &\lesssim
&\|\varphi\|_{L^2},\\
\label{est1}\|D^{1/2}_xV(t)\varphi\|_{L^\infty_x L^2_T} &\lesssim
&
\|\varphi\|_{L^2},\\
\label{est2}\|D^{-1/4}_xV(t)\varphi\|_{L^4_x L^\infty_T} &\lesssim
& \|\varphi\|_{L^2}.\end{eqnarray} Moreover, if $T\leq 1$ and $j\geq 0$,
\begin{align}
\|Q_{\leq 0}V(t)\varphi\|_{L^2_xL^\infty_T} &\lesssim \|Q_{\leq 0}\varphi\|_{L^2}\\
2^{-j/2}\|Q_jV(t)\varphi\|_{L^2_xL^\infty_T} &\lesssim \|Q_j\varphi\|_{L^2}
\end{align}
\end{lemma}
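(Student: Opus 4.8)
The plan is to prove the five estimates in increasing order of difficulty after recording that $V(t)$ acts in Fourier as multiplication by $e^{it\xi|\xi|}$ (so that $\mathcal{H}\partial_x^2$ has symbol $i\xi|\xi|$ and the dispersion relation is $\omega(\xi)=\xi|\xi|$), and that the same phase structure governs $S(t)=e^{it\partial_x^2}$, so that every argument below transfers verbatim to the Schr\"odinger case. Estimate (\ref{est0}) is immediate: since $|e^{it\xi|\xi|}|=1$, Plancherel gives $\|V(t)\varphi\|_{L^2_x}=\|\varphi\|_{L^2}$ for each fixed $t$, and taking the supremum in $t$ yields (in fact with equality) the bound. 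For the smoothing estimate (\ref{est1}) I would fix $x$ and compute the $L^2_T$-norm by Plancherel \emph{in time}. Writing
$$D_x^{1/2}V(t)\varphi(x)=\int_{\R}e^{ix\xi}e^{it\xi|\xi|}|\xi|^{1/2}\hat\varphi(\xi)\,d\xi$$
and changing variables $\tau=\xi|\xi|$ separately on $\{\xi>0\}$ and $\{\xi<0\}$ turns this into a rescaled Fourier transform in $t$ whose $L^2_t$-norm, by Plancherel, equals a quantity \emph{independent of $x$} and comparable to $\|\varphi\|_{L^2}$; the uniformity in $x$ then produces the $L^\infty_x$ bound. This is the standard local smoothing computation and presents no obstacle.

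The genuine difficulty lies in the three maximal-in-time estimates, namely (\ref{est2}) and the two frequency-localized $L^2_xL^\infty_T$ bounds. The key point is that the sharp power of $2^j$ cannot be obtained by soft means: a one-dimensional Sobolev embedding in time, $\|g\|_{L^\infty_T}^2\lesssim T^{-1}\|g\|_{L^2_T}^2+\|g\|_{L^2_T}\|\partial_t g\|_{L^2_T}$, costs a full factor $2^j$ on frequency $2^j$ (since $\partial_t$ contributes the symbol $\xi|\xi|\sim 2^{2j}$), rather than the required $2^{j/2}$. The sharp gain must therefore come from the oscillatory, dispersive structure. For (\ref{est2}) I would run a $TT^*$ argument: dualizing the $L^4_xL^\infty_T$ norm reduces the claim to an $L^{4/3}_xL^1_T\to L^4_xL^\infty_T$ bound for the operator with kernel $\int e^{i((x-y)\xi+(t-s)\xi|\xi|)}|\xi|^{-1/2}\,d\xi$; stationary phase gives the pointwise kernel decay, and the precise $L^4$ integrability is the content of a one-dimensional Carleson--Sj\"olin type oscillatory integral estimate. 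For the last two estimates I would frequency-localize to $|\xi|\sim 2^j$ and rescale $\xi=2^j\eta$, $x\mapsto 2^{-j}x$, $t\mapsto 2^{-2j}t$, reducing to the unit-frequency problem on a time interval that is \emph{short} because $T\le1$ and $j\ge0$; estimating the resulting unit-frequency kernel by stationary phase yields exactly the factor $2^{j/2}$ (and the low-frequency bound for $Q_{\le0}$), which is precisely the smoothing gain reappearing in the flipped norm.

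The main obstacle is thus the sharp maximal-in-time analysis: obtaining the exact $2^{j/2}$ in the localized estimates and the critical $L^4$ exponent in (\ref{est2}). Both rest on oscillatory-integral estimates for the kernel rather than on energy identities, and I would either invoke the classical results of Kenig--Ponce--Vega (the cited references \cite{MR1101221,MR1086966}) directly after the $TT^*$ reduction, or reprove them via stationary phase for the localized kernels and a Carleson--Sj\"olin estimate for the $L^4$ bound. The role of the hypotheses $T\le1$ and $j\ge0$ is to guarantee that, after rescaling to unit frequency, the time interval stays bounded so that the short-time kernel estimate applies uniformly in $j$.
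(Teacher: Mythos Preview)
Your sketch is correct, but note that the paper does not actually prove this lemma: it simply records the estimates and cites \cite{MR1101221,MR1086966} for the proofs. Your outline---Plancherel for (\ref{est0}), the $\tau=\xi|\xi|$ change of variables for the smoothing estimate (\ref{est1}), a $TT^\ast$ reduction plus oscillatory-integral/kernel bounds for the maximal estimate (\ref{est2}), and rescaling to unit frequency for the localized $L^2_xL^\infty_T$ bounds---is precisely the standard route taken in those references, so you are in full agreement with (and more explicit than) the paper.
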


\begin{definition} A triplet $(\alpha,p,q)\in\R\times[2,\infty]^2$
is said to be
 1-admissible if
$(\alpha,p,q)=(1/2,\infty,2)$ or \begin{equation}\label{1ad} 4\leq
p<\infty,\quad 2< q\leq\infty,\quad
\frac{2}{p}+\frac{1}{q}\leq\frac{1}{2},\quad
\alpha=\frac{1}{p}+\frac{2}{q}-\frac{1}{2}.
\end{equation}
\end{definition}

By Sobolev embedding and interpolation between estimates (\ref{est1}) and (\ref{est2}) we obtain the following result.

\begin{proposition}[\cite{MR2101982}]\label{admis} If $(\alpha,p,q)$ is 1-admissible, then for all $\varphi$ in
$\S(\R)$,
\begin{equation}\label{in1ad}\|D^\alpha_xV(t)\varphi\|_{L^p_xL^q_T}\lesssim
\|\varphi\|_{L^2}.\end{equation}
\end{proposition}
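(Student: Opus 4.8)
The plan is to prove Proposition~\ref{admis}, namely that for any $1$-admissible triplet $(\alpha,p,q)$ one has $\|D^\alpha_xV(t)\varphi\|_{L^p_xL^q_T}\lesssim\|\varphi\|_{L^2}$. The two inputs available from Lemma~\ref{lem-estlin} are the Kato-type smoothing estimate $\|D^{1/2}_xV(t)\varphi\|_{L^\infty_xL^2_T}\lesssim\|\varphi\|_{L^2}$ and the Strichartz-type maximal estimate $\|D^{-1/4}_xV(t)\varphi\|_{L^4_xL^\infty_T}\lesssim\|\varphi\|_{L^2}$. These are precisely the two endpoints of the admissible region: the first corresponds to $(\alpha,p,q)=(1/2,\infty,2)$, which is the explicitly allowed boundary case, and the second to $(\alpha,p,q)=(-1/4,4,\infty)$. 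The idea is that every other admissible triplet is obtained from these two by interpolation in the space variable followed by a Sobolev embedding in time.

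Let me sketch the details. First I would interpolate the two endpoint bounds in the $x$-direction. More precisely, writing the mixed norm $L^p_xL^\infty_T$ as an $L^p_x$ norm of the function $x\mapsto\|V(t)\varphi\|_{L^\infty_T}$ (and similarly for $L^\infty_xL^2_T$), I would apply the complex (or Stein--Weiss) interpolation between the operators $\varphi\mapsto D^{1/2}_xV(t)\varphi\in L^\infty_xL^2_T$ and $\varphi\mapsto D^{-1/4}_xV(t)\varphi\in L^4_xL^\infty_T$, both mapping from $L^2_x$. For $\theta\in[0,1]$ this yields, with $\frac1p=\frac{1-\theta}{4}$ and $\frac1q=\frac\theta2$, a bound of the form $\|D^{\beta}_xV(t)\varphi\|_{L^p_xL^q_T}\lesssim\|\varphi\|_{L^2}$, where the interpolated derivative index is $\beta=\theta\cdot\tfrac12+(1-\theta)\cdot(-\tfrac14)=\tfrac{3\theta}{4}-\tfrac14$. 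Reading off $\theta=2/q$ and $1-\theta=4/p$, one computes $\beta=\tfrac{3}{2q}-\tfrac14\cdot\tfrac{4}{p}\cdot?$; the clean way is to substitute $\theta=2/q$ directly, giving $\beta=\tfrac{3}{2}\cdot\tfrac1q-\tfrac14$ along the curve where $\tfrac1p=\tfrac{1-\theta}4$. This recovers the admissible relation on the boundary segment where equality holds in $\tfrac2p+\tfrac1q=\tfrac12$.

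To cover the full admissible range, where one only has the inequality $\tfrac2p+\tfrac1q\le\tfrac12$ and the specified $\alpha=\tfrac1p+\tfrac2q-\tfrac12$, the remaining step is a Sobolev embedding in the $x$-variable. Given a target triplet $(\alpha,p,q)$ satisfying~(\ref{1ad}), I would first produce by the interpolation above an estimate at an exponent pair $(\tilde p,q)$ with a smaller spatial integrability $\tilde p\ge p$ lying on the boundary curve, and then use the one-dimensional Sobolev embedding $D^{-\sigma}_x:L^{\tilde p}_x\hookrightarrow L^{p}_x$ with $\sigma=\tfrac1{\tilde p}-\tfrac1p\ge0$ (applied uniformly in the $T$-variable, which is harmless since the embedding acts only in $x$) to transfer the bound to $L^p_xL^q_T$ while adjusting the derivative count exactly by $\sigma$. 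Matching the arithmetic so that the net derivative index equals the prescribed $\alpha=\tfrac1p+\tfrac2q-\tfrac12$ is a direct computation with the exponent relations.

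The main obstacle I anticipate is the interpolation of vector-valued (mixed-norm) spaces with distinct inner time-exponents ($L^2_T$ against $L^\infty_T$) and the simultaneous fractional-derivative weights $D^\alpha_x$: one must justify that complex interpolation commutes properly with the Fourier multiplier $D^\alpha_x$ and with the analytic family of operators, which requires care because $D^\alpha_x$ is unbounded. This is handled by inserting the derivative into an analytic family $T_z=D^{a(z)}_xV(t)$ and invoking Stein's interpolation theorem for analytic families on the strip, rather than naive real interpolation. Once that technical point is secured, the exponent bookkeeping and the Sobolev step are routine.
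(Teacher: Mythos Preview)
Your approach---complex/Stein interpolation between the smoothing endpoint $(1/2,\infty,2)$ and the maximal endpoint $(-1/4,4,\infty)$, followed by Sobolev embedding in $x$ to fill out the region $\tfrac2p+\tfrac1q\le\tfrac12$---is exactly what the paper does (it cites \cite{MR2101982} and records the proof as ``Sobolev embedding and interpolation between (\ref{est1}) and (\ref{est2})''). One small slip: the boundary point $(\tilde p,q)$ with $\tfrac2{\tilde p}+\tfrac1q=\tfrac12$ satisfies $\tilde p\le p$, not $\tilde p\ge p$, so the Sobolev embedding goes from $L^{\tilde p}_x$ to $L^p_x$ at the cost of $\sigma=\tfrac1{\tilde p}-\tfrac1p\ge0$ derivatives, and the identity $\alpha+\sigma=\tilde\alpha$ checks.
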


Now we define our resolution spaces.
\begin{definition}
Let $k\geq 4$ and $s\in\R$ be fixed. For $0<\eps\ll 1$, we define the spaces $\dX=\dot{\S}^{s,\eps}\cap\dot{\S}^{s,1}$
endowed with the norm
$$\|u\|_\dX=\|u\|_{\dot{\S}^{s,\eps}}+\|u\|_{\dot{\S}^{s,1}}.$$
\end{definition}

At this stage it is important to remark that $\dX$ does not contain any $L^\infty_T$ component. As a consequence, for each
$u\in\dX$ and $\eta>0$ fixed, we can choose $T=T(u)$ such that $\|u\|_\dX<\eta$.

\vskip 0.3cm

In the case $k=3$, we shall require the following result which is not covered by Proposition \ref{admis}.
\begin{lemma}[\cite{MR2101982}]\label{lem-linl3}
Let $0<T\leq 1$ and $s>1/3$. Then it holds that
\begin{equation}\label{est3}\|V(t)\varphi\|_{L^3_xL^\infty_T}\lesssim
\|\varphi\|_{H^s},\quad \forall \varphi\in\S(\R).\end{equation}
\end{lemma}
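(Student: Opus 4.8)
The plan is to decompose $\varphi$ into Littlewood--Paley pieces and to interpolate, frequency by frequency, between the two maximal estimates already at our disposal: the $L^2_xL^\infty_T$ bound from Lemma \ref{lem-estlin} and the $L^4_xL^\infty_T$ bound coming from (\ref{est2}). Since the triplet corresponding to $p=3$, $q=\infty$ lies strictly below the $1$-admissible range ($p<4$), no single application of Proposition \ref{admis} can yield the result; the gain must come from summing dyadic blocks, and this is exactly where the restriction $s>1/3$ will surface.

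First I would record the single-frequency estimates. For the $x$-variable, observe that $x\mapsto \|V(t)Q_j\varphi(x,\cdot)\|_{L^\infty_T}$ is an ordinary function whose $L^p_x$ norm equals $\|V(t)Q_j\varphi\|_{L^p_xL^\infty_T}$; hence the Lyapunov (log-convexity) inequality applies directly and, with $\tfrac13=\tfrac{1/3}{2}+\tfrac{2/3}{4}$, gives
$$\|V(t)Q_j\varphi\|_{L^3_xL^\infty_T}\le \|V(t)Q_j\varphi\|_{L^2_xL^\infty_T}^{1/3}\,\|V(t)Q_j\varphi\|_{L^4_xL^\infty_T}^{2/3}.$$
From (\ref{est2}) and the frequency localization one has $\|V(t)Q_j\varphi\|_{L^4_xL^\infty_T}\lesssim 2^{j/4}\|Q_j\varphi\|_{L^2}$ for every $j$, while Lemma \ref{lem-estlin} gives $\|V(t)Q_j\varphi\|_{L^2_xL^\infty_T}\lesssim 2^{j/2}\|Q_j\varphi\|_{L^2}$ for $j\ge 0$. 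Combining them yields the high-frequency bound
$$\|V(t)Q_j\varphi\|_{L^3_xL^\infty_T}\lesssim 2^{j/3}\|Q_j\varphi\|_{L^2},\qquad j\ge 0.$$

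It then remains to sum. For the high frequencies, the triangle inequality followed by Cauchy--Schwarz against the weight $2^{js}$ yields
$$\sum_{j\ge 0}\|V(t)Q_j\varphi\|_{L^3_xL^\infty_T}\lesssim\Big(\sum_{j\ge 0}2^{2j(\frac13-s)}\Big)^{1/2}\Big(\sum_{j\ge 0}2^{2js}\|Q_j\varphi\|_{L^2}^2\Big)^{1/2}\lesssim\|\varphi\|_{H^s},$$
the first factor being finite precisely when $s>1/3$. This is the crux of the argument and explains why the endpoint $s=1/3$ is lost. For the low-frequency block $Q_{\le 0}\varphi$ I would avoid the $2^{j/2}$ growth of the $L^2_xL^\infty_T$ estimate, which would otherwise ruin the summation as $j\to-\infty$: using instead the derivative-free bound $\|V(t)Q_{\le 0}\varphi\|_{L^2_xL^\infty_T}\lesssim\|\varphi\|_{L^2}$ of Lemma \ref{lem-estlin} together with $\|V(t)Q_{\le 0}\varphi\|_{L^4_xL^\infty_T}\lesssim\|D_x^{1/4}Q_{\le 0}\varphi\|_{L^2}\lesssim\|\varphi\|_{L^2}$, the same Lyapunov interpolation gives $\|V(t)Q_{\le 0}\varphi\|_{L^3_xL^\infty_T}\lesssim\|\varphi\|_{L^2}\lesssim\|\varphi\|_{H^s}$. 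Adding the two contributions closes the estimate. The only genuine obstacle is the high-frequency summation, which forces $s>1/3$; the hypothesis $T\le 1$ enters only through the maximal bounds of Lemma \ref{lem-estlin}, and the low-frequency block is precisely where the non-homogeneous, $L^2$-based right-hand side is needed.
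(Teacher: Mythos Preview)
Your argument is correct: the Lyapunov interpolation between the $L^2_xL^\infty_T$ and $L^4_xL^\infty_T$ maximal estimates of Lemma~\ref{lem-estlin} gives the dyadic bound $2^{j/3}\|Q_j\varphi\|_{L^2}$ for $j\ge 0$, and the Cauchy--Schwarz summation then forces $s>1/3$; the low-frequency block is handled exactly as you say. Note, however, that the paper does not give its own proof of this lemma---it is quoted directly from \cite{MR2101982}---so there is nothing to compare against here beyond observing that your proof is the standard one and is essentially what one finds in that reference.
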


We next state the $L^p_xL^q_T$ and $L^q_TL^p_x$ estimates for the linear operator $f\mapsto\int_0^tV(t-t')f(t')dt'$.

\begin{lemma}[\cite{MR2101982}]\label{lem-estnohom} Let $\alpha\in\R$,
 and $2< p,q\leq \infty$
such that for all $\varphi\in\mathcal{S}(\R)$,
\[\|D^{\alpha}_xV(t)\varphi\|_{L^{p}_xL^{q}_T}\lesssim
\|\varphi\|_{L^2}.\]
 Then for all
$f\in\mathcal{S}(\R^2)$,
\begin{equation}\label{estnonhom}\Big\|D^{1/2}_x\int_0^tV(t-t')f(t')dt'\Big\|_{L^\infty_TL^2_x}\lesssim
\|f\|_{L^1_xL^{2}_T},\end{equation}
\begin{equation}\label{estnonhom2}\Big\|D^{\alpha+1/2}_x\int_0^tV(t-t')f(t')dt'\Big\|_{L^{p}_xL^{q}_T}
\lesssim
\|f\|_{L^{1}_xL^{2}_T}.\end{equation}
Similarly, if $$\|D_x^\alpha V(t)\varphi\|_{L^p_xL^q_T}\lesssim \|\varphi\|_{H^s}$$ for any $\varphi\in \mathcal{S}(\R)$, then
\begin{equation}\label{estnonhom3}\Big\|D^{\alpha+1/2}_x\int_0^tV(t-t')f(t')dt'\Big\|_{L^{p}_xL^{q}_T}
\lesssim
\|\langle D_x\rangle^sf\|_{L^{1}_xL^{2}_T}.\end{equation}
\end{lemma}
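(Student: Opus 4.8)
The plan is to derive the inhomogeneous (Duhamel) estimates from the homogeneous ones stated in Proposition \ref{admis} and Lemma \ref{lem-linl3}, using the classical Christ--Kiselev lemma together with a $TT^*$-type argument. The key structural observation is that for the Schr\"{o}dinger-type group the retarded operator $f\mapsto\int_0^tV(t-t')f(t')\,dt'$ can be compared to the non-retarded operator $f\mapsto\int_{\R}V(t-t')f(t')\,dt'=V(t)\int_{\R}V(-t')f(t')\,dt'$, which factors through the dual of the homogeneous estimate. I would first treat the full-line operator and then recover the truncated one, since the latter has the causal structure needed for the fixed-point scheme.

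First I would establish \eqref{estnonhom}. Writing $w(t)=\int_0^tV(t-t')f(t')\,dt'$, I observe that $\partial_tw-\H\partial_x^2w=f$ with $w(0)=0$, so by the energy method (or equivalently by the conservation of the $L^2_x$ norm under $V$) the quantity $D_x^{1/2}w$ can be controlled in $L^\infty_TL^2_x$ by duality against a test function $g$. Pairing and using Fubini, the bilinear form reduces to $\int\!\!\int \langle V(-t')f(t'),\,D_x V(-t)g(t)\rangle\,\mathbf 1_{t'<t}\,dt\,dt'$; dropping the indicator by Christ--Kiselev (licit because $1<\infty$, i.e. the source exponent $L^1_x$ is strictly better in time integrability than the output), this splits as a product of two single-variable operators, each estimated by \eqref{est1} and its dual. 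This yields the bound by $\|f\|_{L^1_xL^2_T}$.

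Next I would prove \eqref{estnonhom2} in the same spirit: apply $D_x^{\alpha+1/2}$, factor the full-line operator as $D_x^\alpha V(t)\circ[D_x^{1/2}\int V(-t')f\,dt']$, control the inner factor in $L^2_x$ exactly as above, and then apply the hypothesis $\|D_x^\alpha V(t)\varphi\|_{L^p_xL^q_T}\lesssim\|\varphi\|_{L^2}$ to the outer factor. The Christ--Kiselev lemma again removes the time cutoff; its hypothesis is satisfied precisely because $p,q>2$ guarantee the requisite ordering of Lebesgue exponents between source and target. The non-homogeneous estimate \eqref{estnonhom3} follows identically, merely replacing the $L^2$-based homogeneous bound by the $H^s$-based one from Lemma \ref{lem-linl3}, so that $\langle D_x\rangle^s$ appears on the right-hand side in place of a bare $L^2$ norm.

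The main obstacle is the rigorous application of the Christ--Kiselev lemma in the mixed-norm setting: one must check that the operators act between Banach spaces with the correct nesting of integrability exponents (the output time-norm must dominate, or be dominated by, the input time-norm in the sense required by the lemma), and that the kernel is genuinely of the retarded convolution form. The borderline exponents $p=\infty$ or $q=\infty$ require separate care, since Christ--Kiselev fails at equal exponents; here the strict inequalities $2<p,q\le\infty$ in the hypotheses, combined with the $L^1_x$ source, keep us safely away from the failure threshold. Once these exponent bookkeeping details are handled, the estimates are immediate consequences of the homogeneous ones.
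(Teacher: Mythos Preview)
The paper does not prove this lemma; it is quoted from \cite{MR2101982}. Your outline---dualize the smoothing estimate \eqref{est1} to control $D_x^{1/2}\int V(-t')f(t')\,dt'$ in $L^2_x$, compose with the assumed homogeneous bound on $V(t)$, and pass from the non-retarded to the retarded operator via Christ--Kiselev---is the standard route and is essentially what one finds in that reference. The reversed-norm variant of Christ--Kiselev you need is exactly Theorem~B of \cite{MR2227135}, which the present paper itself invokes a few lines later in the proof of Lemma~\ref{lem-nohombes}.

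Two small corrections. First, for \eqref{estnonhom} you do not need Christ--Kiselev at all: by unitarity of $V(t)$ on $L^2_x$,
\[
\Big\|D_x^{1/2}\int_0^tV(t-t')f(t')\,dt'\Big\|_{L^2_x}
=\Big\|\int_0^tD_x^{1/2}V(-t')f(t')\,dt'\Big\|_{L^2_x}
\lesssim\|\chi_{[0,t]}f\|_{L^1_xL^2_T}\le\|f\|_{L^1_xL^2_T},
\]
the middle inequality being precisely the dual of \eqref{est1}; taking the supremum over $t$ gives \eqref{estnonhom} directly. Second, your justification of the Christ--Kiselev hypothesis is garbled: the exponent $1$ in $L^1_x$ is \emph{spatial} and irrelevant to the lemma. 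What matters is the ordering of the \emph{time} exponents, here $2<q$, which is exactly why the statement assumes $2<q\le\infty$. With that correction the argument for \eqref{estnonhom2}--\eqref{estnonhom3} goes through as you describe.
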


We shall need the following Besov version of Lemma \ref{lem-estnohom}.

\begin{lemma}\label{lem-nohombes} Let $k\geq 4$.
For all $f\in\S(\R)^2$,
$$\Big\|\int_0^tV(t-t')f(t')dt'\Big\|_{L^k_xL^\infty_T}\lesssim \|f\|_{\dot{\N}^{s_k,1}}.$$
\end{lemma}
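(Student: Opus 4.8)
The plan is to reduce everything to a single frequency-localized maximal estimate and then reassemble the pieces in $\ell^2$. First I would observe that, unwinding the definition with $\theta=1$, the target norm is
$$\|f\|_{\dot{\N}^{s_k,1}}=\Big(\sum_{j\in\Z}\big[2^{j(s_k-1/2)}\|Q_jf\|_{L^1_xL^2_T}\big]^2\Big)^{1/2}=\Big(\sum_{j\in\Z}\big[2^{-j/k}\|Q_jf\|_{L^1_xL^2_T}\big]^2\Big)^{1/2},$$
since $s_k-1/2=-1/k$. Writing $u=\int_0^tV(t-t')f(t')dt'$ and using that $Q_j$ commutes with $V(t)$ and $D_x$, the whole estimate will follow once I prove the two ingredients: (i) a per-frequency bound $\|Q_ju\|_{L^k_xL^\infty_T}\lesssim 2^{-j/k}\|Q_jf\|_{L^1_xL^2_T}$, and (ii) a summation step $\|u\|_{L^k_xL^\infty_T}\lesssim\big(\sum_j\|Q_ju\|_{L^k_xL^\infty_T}^2\big)^{1/2}$. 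Combining (i) and (ii) reproduces exactly $\|f\|_{\dot{\N}^{s_k,1}}$.

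For ingredient (i) I would check that the triplet $(\alpha,p,q)=(-s_k,k,\infty)$ is $1$-admissible: indeed $p=k\geq 4$, $q=\infty>2$, $\frac{2}{k}+0\leq\frac12$ precisely because $k\geq4$, and $\alpha=\frac1k+0-\frac12=-s_k$. Hence Proposition \ref{admis} gives $\|D_x^{-s_k}V(t)\varphi\|_{L^k_xL^\infty_T}\lesssim\|\varphi\|_{L^2}$, and feeding this into the inhomogeneous estimate (\ref{estnonhom2}) of Lemma \ref{lem-estnohom} with $\alpha=-s_k$ yields
$$\Big\|D_x^{1/k}\int_0^tV(t-t')g(t')dt'\Big\|_{L^k_xL^\infty_T}\lesssim\|g\|_{L^1_xL^2_T},$$
using $-s_k+1/2=1/k$. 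Applying this to $g=Q_jf$ and using that $D_x^{1/k}$ acts like $2^{j/k}$ on frequencies $|\xi|\sim 2^j$ gives $\|Q_ju\|_{L^k_xL^\infty_T}\lesssim 2^{-j/k}\|Q_jf\|_{L^1_xL^2_T}$, which is exactly (i).

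The main obstacle is ingredient (ii), the passage from the individual frequency blocks to the $\ell^2$ sum, because it must be performed at the $L^\infty_T$ endpoint. Here I would invoke the Littlewood--Paley square function estimate in the mixed norm, $\|u\|_{L^k_xL^\infty_T}\lesssim\big\|(\sum_j|Q_ju|^2)^{1/2}\big\|_{L^k_xL^\infty_T}$, followed by Minkowski's inequality, which is available because $k\geq 4\geq 2$: indeed $\sup_t(\sum_j|Q_ju|^2)^{1/2}\leq(\sum_j\|Q_ju(x,\cdot)\|_{L^\infty_T}^2)^{1/2}$ pointwise in $x$, and then the triangle inequality in $L^{k/2}_x$ gives $\big\|(\sum_j|Q_ju|^2)^{1/2}\big\|_{L^k_xL^\infty_T}\leq\big(\sum_j\|Q_ju\|_{L^k_xL^\infty_T}^2\big)^{1/2}$. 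This is precisely the embedding $\dot{\B}^{0,2}_k(L^\infty_T)\hookrightarrow L^k_xL^\infty_T$ that underlies the whole scheme (the paper already replaces $L^k_xL^\infty_T$ by this Besov space in the introduction). I expect the delicate point to be justifying the lower square-function bound with an $L^\infty_T$ inner norm, since $L^\infty$ fails to be UMD; one has to exploit that the $Q_j$ act only in the spatial variable and that $k\geq 2$, rather than appealing to a naive vector-valued Littlewood--Paley theorem. Once (ii) is in hand, the chain (ii)$\to$(i)$\to$definition of $\|\cdot\|_{\dot{\N}^{s_k,1}}$ closes the proof.
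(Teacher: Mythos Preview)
Your ingredient (i) is correct, and the verification that $(-s_k,k,\infty)$ is $1$-admissible together with Lemma~\ref{lem-estnohom} is exactly right. The gap is ingredient (ii): the square-function lower bound
\[
\|u\|_{L^k_xL^\infty_T}\lesssim\Big\|\Big(\sum_j|Q_ju|^2\Big)^{1/2}\Big\|_{L^k_xL^\infty_T}
\]
is \emph{false}, and no appeal to ``$Q_j$ acts only in $x$'' or ``$k\geq 2$'' can rescue it. Here is a counterexample. Take $\psi\in\S(\R)$ with $\hat\psi$ supported in $[-1/4,1/4]$ and $|\psi|\geq c>0$ on $[-1,1]$, set $\psi_j(x)=e^{i2^jx}\psi(x)$ (so $\hat\psi_j$ lives in the dyadic annulus $|\xi|\sim 2^j$), and let
\[
u(x,t)=\sum_{j=1}^N e^{-i2^jt}\psi_j(x)=\psi(x)\sum_{j=1}^N e^{i2^j(x-t)}.
\]
Then $Q_ju(x,t)=e^{-i2^jt}\psi_j(x)$, so $(\sum_j|Q_ju|^2)^{1/2}=N^{1/2}|\psi(x)|$ and the right-hand side above equals $N^{1/2}\|\psi\|_{L^k}$. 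But for $T\geq 1$ and $|x|\leq 1$ the choice $t=x$ makes the exponential sum equal to $N$, so $\sup_t|u(x,t)|\geq cN$ and $\|u\|_{L^k_xL^\infty_T}\gtrsim N$. The ratio blows up like $N^{1/2}$. (Your reading of the introduction is also off: the paper says one may \emph{replace} $L^k_xL^\infty_T$ by $\B^{0,2}_k(L^\infty_T)$ to obtain small-data results, precisely because no embedding is available.)

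The paper avoids this obstruction by a $TT^\ast$ argument, which places the $\ell^2$ summation in the intermediate Hilbert space $L^2_x$ where orthogonality is free. Concretely: the dual smoothing bound $\big\|\int_{-T}^T D_x^{1/2}V(-t')h\,dt'\big\|_{L^2}\lesssim\|h\|_{L^1_xL^2_T}$ upgrades, via $L^2=\B^{0,2}_2$ and Plancherel applied block by block, to $\lesssim\|h\|_{\B^{0,2}_1(L^2_T)}$. Composing this with the homogeneous maximal estimate $\|D_x^{-s_k}V(t)\varphi\|_{L^k_xL^\infty_T}\lesssim\|\varphi\|_{L^2}$ gives
\[
\Big\|\int_{-T}^T V(t-t')f(t')\,dt'\Big\|_{L^k_xL^\infty_T}\lesssim\|f\|_{\B^{-1/k,2}_1(L^2_T)}=\|f\|_{\dot{\N}^{s_k,1}},
\]
and the truncation $\int_{-T}^T\to\int_0^t$ is then handled by the Christ--Kiselev lemma for reversed space-time norms. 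The missing idea in your approach is exactly this passage through $L^2$.
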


\begin{proof}  Note that the triplets $(1/2,\infty,2)$ and $(-s_k,k,\infty)$ are both 1-admissible. In particular we deduce
$$\Big\|\int_{-T}^TD_x^{1/2}V(-t')h(t')dt'\Big\|_{L^2}\lesssim \|h\|_{L^1_xL^2_T},\quad \forall h\in \S(\R^2),$$
which is the dual estimate of (\ref{in1ad}) for $(\alpha,p,q)=(1/2,\infty,2)$. Since $L^2=\B^{0,2}_2$, we infer
$$\Big\|\int_{-T}^TD_x^{1/2}V(-t')h(t')dt'\Big\|_{L^2}\lesssim \|h\|_{\B^{0,2}_1(L^2_T)},\quad \forall h\in \S(\R^2).$$
The usual $TT^\ast$ argument provides
$$\Big\|\int_{-T}^TV(t-t')f(t')dt'\Big\|_{L^k_xL^\infty_T}\lesssim \|f\|_{\B^{-1/k,2}_1(L^2_T)}.$$
We can conclude with the Christ-Kiselev lemma for reversed norms (Theorem B in \cite{MR2227135}).
\end{proof}

\subsection{Linear estimates for equation (\ref{gBO2})}

Here and hereafter we take $k\geq 4$, the special case $k=3$ will be discussed in Section \ref{sec-keq3}.

Next lemma will be crucial in the proof of our main results.

\begin{lemma}\label{lemsmall}
Let $k\geq 4$ and $u_0\in \dot{H}^{s_k}$. For any $\eta>0$, there exists $T=T(u_0)$ such that
$$\|V(t)u_0-u_0\|_{L^k_xL^\infty_T}<\eta.$$
\end{lemma}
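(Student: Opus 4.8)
The plan is to establish the estimate first for the Schwartz-dense case and then argue by density, exploiting the fact that the claim is a statement about the behavior of $V(t)u_0-u_0$ as $T\to 0$ rather than a uniform-in-$u_0$ bound. The key observation is that $\|V(t)u_0-u_0\|_{L^k_xL^\infty_T}$ need only be made small for a \emph{fixed} $u_0$, so we are free to choose $T$ depending on $u_0$. First I would fix $\delta>0$ and use the density of $\S(\R)$ (or of smooth, frequency-localized functions) in $\dot{H}^{s_k}(\R)$ to write $u_0=\varphi+r$ with $\varphi$ smooth and $\|r\|_{\dot{H}^{s_k}}$ as small as we like. Applying the linear inequality~(\ref{in1ad}) with the $1$-admissible triplet $(-s_k,k,\infty)$ (so that $D_x^{-s_k}V(t)$ maps $L^2$ into $L^k_xL^\infty_T$, equivalently $V(t)$ maps $\dot{H}^{s_k}$ into $L^k_xL^\infty_T$) to the remainder, we get
\[
\|V(t)r-r\|_{L^k_xL^\infty_T}\lesssim \|V(t)r\|_{L^k_xL^\infty_T}+\|r\|_{L^k_x}\lesssim \|r\|_{\dot{H}^{s_k}},
\]
using also the Sobolev embedding $\dot{H}^{s_k}\hookrightarrow L^k$ (valid precisely because $s_k=1/2-1/k$ is the scaling-critical exponent for this embedding) to control the $\|r\|_{L^k_x}$ term, which is independent of $t$ and hence of $T$. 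Thus the remainder contributes at most $C\delta$ uniformly in $T$.

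It then remains to handle the smooth piece $\varphi$, for which I expect the bound
\[
\|V(t)\varphi-\varphi\|_{L^k_xL^\infty_T}\longrightarrow 0\quad\text{as }T\to 0.
\]
Here I would use the regularity of $\varphi$: writing $V(t)\varphi-\varphi=\int_0^t\partial_{t'}V(t')\varphi\,dt'=\int_0^t V(t')\,i\H\partial_x^2\varphi\,dt'$, the time-derivative is again a nice linear evolution applied to the Schwartz function $\H\partial_x^2\varphi$. A crude bound gives, for each fixed $x$,
\[
|V(t)\varphi(x)-\varphi(x)|\leq \int_0^{|t|}|V(t')\H\partial_x^2\varphi(x)|\,dt'\leq |t|\,\sup_{|t'|\leq T}\|V(t')\H\partial_x^2\varphi\|_{L^\infty_x},
\]
and since $\|V(t')\psi\|_{L^\infty_x}$ is bounded uniformly on $[-T,T]$ for Schwartz $\psi$ (the free evolution preserves Schwartz regularity with locally bounded seminorms), taking the $L^\infty_T$ norm and then the $L^k_x$ norm — after inserting a fixed weight to make the $x$-integral converge — yields a bound of the form $C(\varphi)\,T$. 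Hence for $T$ small enough the smooth contribution is also at most $\delta$, and combining the two pieces gives $\|V(t)u_0-u_0\|_{L^k_xL^\infty_T}<\eta$ for a suitable $\eta\sim\delta$.

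The main technical obstacle is the second step: controlling the $L^k_x$ integral of the pointwise-in-$t$ difference for the smooth piece, since the naive bound $C(\varphi)\,T$ above is only pointwise and one must ensure spatial integrability in $x$. The clean way around this is to avoid the pointwise argument entirely and instead exploit continuity of the map $t\mapsto V(t)\varphi$ in a space that embeds into $L^k_x$: since $V(t)$ is a strongly continuous group on every $\dot{H}^\sigma$, one has $V(t)\varphi\to\varphi$ in $\dot{H}^{s_k}$ as $t\to 0$ for the smooth (indeed for any) $\varphi$, and a quantitative version using the extra smoothness of $\varphi$ gives $\|V(t)\varphi-\varphi\|_{\dot{H}^{s_k}}\lesssim |t|\,\|\varphi\|_{\dot{H}^{s_k+2}}$. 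One would then like to convert this into an $L^k_xL^\infty_T$ bound; the subtlety is that the $L^\infty_T$ norm does not commute with the Sobolev estimate, so the correct route is to apply the admissible estimate~(\ref{in1ad}) directly to $V(t)\varphi-\varphi$ after a further frequency truncation, splitting into low and high frequencies and using~(\ref{est2}) together with the smoothing factor $|t|$ coming from the fundamental theorem of calculus in $t$. Getting this frequency-localized, $T$-dependent bound to close is the heart of the argument; once it is in place, the density reduction makes the rest routine.
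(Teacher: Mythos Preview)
Your strategy is correct and is essentially the paper's own argument in disguise: the paper frequency-truncates $u_0$ at level $N$, which is precisely your density decomposition with the explicit choice $\varphi=\sum_{|j|<N}Q_ju_0$ and $r=\sum_{|j|\geq N}Q_ju_0$; the tail is controlled by $(\sum_{|j|>N}\|Q_ju_0\|_{\dot{H}^{s_k}}^2)^{1/2}$ (your bound on $r$), and the middle piece by the fundamental theorem of calculus in $t$ plus the maximal estimate, yielding the bound $T\,2^{2N}\|u_0\|_{\dot{H}^{s_k}}$.

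Where you hesitate unnecessarily is the smooth piece. You correctly write $|V(t)\varphi(x)-\varphi(x)|\leq \int_0^{|t|}|V(t')\H\partial_x^2\varphi(x)|\,dt'$, but then you discard the $x$-dependence by passing to $\|\cdot\|_{L^\infty_x}$ on the right. Keep it instead:
\[
\sup_{|t|\leq T}|V(t)\varphi(x)-\varphi(x)|\leq T\sup_{|t'|\leq T}|V(t')\H\partial_x^2\varphi(x)|,
\]
and taking $L^k_x$ of both sides gives
\[
\|V(t)\varphi-\varphi\|_{L^k_xL^\infty_T}\leq T\,\|V(t)\H\partial_x^2\varphi\|_{L^k_xL^\infty_T}\lesssim T\,\|\varphi\|_{\dot{H}^{s_k+2}}
\]
directly from the $1$-admissible estimate with the triplet $(-s_k,k,\infty)$ applied to $\H\partial_x^2\varphi\in\dot H^{s_k}$. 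No further frequency splitting is needed here; the ``technical obstacle'' you describe disappears once you avoid the premature $L^\infty_x$ bound. In fact, once you see this, the abstract density layer becomes superfluous: the paper simply applies this same computation to $Q_ju_0$ for $|j|<N$ and sums, obtaining $T\,2^{2N}\|u_0\|_{\dot H^{s_k}}$.
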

\begin{proof} Let $N>0$ to be chosen later. One has
\begin{align*}\|V(t)u_0-u_0\|_{L^k_xL^\infty_T} &\lesssim
\sum_{|j|<N}\|Q_j(V(t)u_0-u_0)\|_{L^k_xL^\infty_T}+\left(\sum_{|j|>N}\|Q_ju_0\|_{\dot{H}^{s_k}}^2\right)^{1/2}.
\end{align*}
Note
that $v=V(t)u_0-u_0$ solves the equation
$$\partial_tv+\H\partial_x^2v=-\H\partial^2_xu_0$$ with zero
initial data. Thus
$V(t)u_0-u_0=\int_0^tV(t-t')\H\partial_x^2u_0dt'$ and
\begin{align*}\sum_{|j|<N}\|Q_j(V(t)u_0-u_0)\|_{L^k_xL^\infty_T} &\lesssim
\sum_{|j|<N}2^{2j}\Big\|\int_0^tV(t')Q_j
u_0dt'\Big\|_{L^k_xL^\infty_T}\\
&\lesssim T\sum_{|j|<N}2^{2j}\|V(t)Q_ju_0\|_{L^k_xL^\infty_T}\\
&\lesssim T2^{2N}\|u_0\|_{\dot{H}^{s_k}}.
\end{align*}
It suffices now to
choose sufficiently large $N$ and then $T$ small enough.
\end{proof}

Let us turn back to the nonlinear (\ref{gBO}) equation. The sign of the nonlinearity is irrelevant in the study of the local problem, and
we choose for convenience the plus sign.

Using standard paraproduct rearrangements, we can rewrite the nonlinear term in (\ref{gBO}) as follows:
\begin{align*}
\partial_xQ_j(u^{k+1}) &= \partial_xQ_j(\lim_{r\rightarrow\infty}(Q_{<r}u)^{k+1})\\
&= \partial_xQ_j\Big(\sum_{-\infty}^\infty (Q_{<r+1}u)^{k+1}-(Q_{<r}u)^{k+1}\Big)\\
&= \partial_xQ_j\Big(\sum_{-\infty}^\infty Q_ru (Q_{\lesssim r}u)^k\Big)\\
&= \partial_xQ_j\Big(\sum_{r\sim j}Q_ru(Q_{\ll r}u)^k\Big)+\partial_xQ_j\Big(\sum_{r\gtrsim j}(Q_{\sim r}u)^2(Q_{\lesssim r}u)^{k-1}\Big)\\
&= \partial_xQ_j((Q_{\ll j}u)^kQ_{\sim j}u)-g_j.
\end{align*}

We set $$\pi(f,g)=\sum_j\partial_xQ_j((Q_{\ll j}f)^kQ_{\sim j}g)$$
so that (\ref{gBO}) reads
$$\partial_tu+\H\partial_x^2u+\pi(u,u)=g(t,x)$$
with $$g=\sum_jg_j.$$ Setting
$$f=\pi(u_L,u)-\pi(u,u)+g$$ where $u_L=V(t)u_0$ is
the solution of the free BO equation, we see that (\ref{gBO}) is equivalent to
\begin{equation}\label{gbo2}\partial_tu+\H\partial_x^2u+\pi(u_L,u)=f(t,x).\end{equation}
We intend to solve (\ref{gBO}) by a fixed point procedure on the Duhamel formulation of (\ref{gbo2}):
$$u(t)=U(t)u_0-\int_0^tU(t-t')f(t')dt',$$
where $U(t)\varphi$ is solution to
$$\partial_tu+\H\partial_x^2u+\pi(V(t)u_0,u)=0,\quad
u(0)=\varphi.$$ It is worth noticing that $U(t)$ depends on the data $u_0$.

\vskip 0.3cm

Setting $u_j=Q_j u$ and $f_j=Q_jf$, we get from
(\ref{gbo2}) that
$$\partial_t u_j+\H\partial_x^2u_j+\partial_x((u_{0,\ll j})^k
u_j)=\partial_x[((u_{0,\ll j})^k-(u_{L,\ll j})^k)
u_j]-\partial_x[Q_j,(u_{L,\ll j})^k]u_{\sim j}+f_j$$ and we will
denote by $R_j$ the right-hand side. Now take the positive
frequencies and set $v_j=P_+u_j$:
$$i\partial_t v_j+\partial_x^2 v_j+i\partial_x((u_{0,\ll j})^k
v_j)=iP_+R_j.$$ With $b_{\ll j}=\frac 12(u_{0,\ll j})^k$, we obtain
\begin{equation}\label{eq-vj}i\partial_t v_j+(\partial_x+ib_{\ll j})^2v_j= g_j\end{equation}
with \begin{equation}\label{gj}g_j=-i\partial_xb_{\ll j}.
v_j-b_{\ll j}^2v_j+iP_+R_j.\end{equation}

\begin{lemma}\label{lem-vj} Let $v_j$ be a solution to (\ref{eq-vj}) with initial data $v_{0,j}\in\dot{H}^{s_k}\cap\dot{H}^s$. Then there exists
$C=C(u_0)$ such that
$$\|v_j\|_\dX\leq C\|v_{0,j}\|_{\dot{H}^s}+C\|g_j\|_{\dot{\N}^{s,1}}.$$
\end{lemma}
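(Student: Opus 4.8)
The plan is to eliminate the variable coefficient $b_{\ll j}$ from (\ref{eq-vj}) by the localized gauge transform announced in the introduction, reducing the problem to the free Schr\"odinger equation, for which the estimates of Section \ref{sec-lin} are available (recall they hold verbatim for $S(t)=e^{it\partial_x^2}$). Set $\Phi_j(x)=\int_{-\infty}^x b_{\ll j}(y)\,dy$, so that $\partial_x\Phi_j=b_{\ll j}$, and put $w_j=e^{i\Phi_j}v_j$. The key structural point is that $b_{\ll j}=\tfrac12(u_{0,\ll j})^k$ depends only on the datum $u_0$ and not on $t$; hence $\partial_t\Phi_j=0$ and $\partial_tv_j=e^{-i\Phi_j}\partial_tw_j$, while a direct computation gives $(\partial_x+ib_{\ll j})v_j=e^{-i\Phi_j}\partial_xw_j$ and therefore $(\partial_x+ib_{\ll j})^2v_j=e^{-i\Phi_j}\partial_x^2w_j$. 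Thus (\ref{eq-vj}) transforms into the inhomogeneous free equation $i\partial_tw_j+\partial_x^2w_j=e^{i\Phi_j}g_j$ with $w_j(0)=e^{i\Phi_j}v_{0,j}$, whose Duhamel representation reads
$$w_j(t)=S(t)\big(e^{i\Phi_j}v_{0,j}\big)-i\int_0^tS(t-t')\big(e^{i\Phi_j}g_j\big)(t')\,dt'.$$

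Next I would exploit that $v_j=P_+Q_ju$ is localized at frequency $\sim2^j$ to collapse the Besov norms defining $\dX$. Writing $p_\theta=\tfrac{4}{1-\theta}$, $q_\theta=\tfrac2\theta$ and $\alpha_\theta=\tfrac1{p_\theta}+\tfrac2{q_\theta}-\tfrac12=\tfrac{3\theta-1}4$, the triplet $(\alpha_\theta,p_\theta,q_\theta)$ is exactly $1$-admissible and $\alpha_\theta$ equals the Besov shift in $\dot{\S}^{s,\theta}$; consequently, for $\theta\in\{\eps,1\}$,
$$\|v_j\|_{\dot{\S}^{s,\theta}}\sim2^{j(s+\alpha_\theta)}\|v_j\|_{L^{p_\theta}_xL^{q_\theta}_T}=2^{j(s+\alpha_\theta)}\|w_j\|_{L^{p_\theta}_xL^{q_\theta}_T},$$
the last equality because $|e^{\pm i\Phi_j}|=1$. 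To bound $\|w_j\|_{L^{p_\theta}_xL^{q_\theta}_T}$ I would treat the two Duhamel terms separately. For the free term, Proposition \ref{admis} applied to $\psi=e^{i\Phi_j}v_{0,j}$ gives $\|D_x^{\alpha_\theta}S(t)\psi\|_{L^{p_\theta}_xL^{q_\theta}_T}\lesssim\|\psi\|_{L^2}=\|v_{0,j}\|_{L^2}\sim2^{-js}\|v_{0,j}\|_{\dot H^s}$; removing the derivative at frequency $2^j$ costs a factor $2^{-j\alpha_\theta}$, so this term contributes $\lesssim2^{-j(s+\alpha_\theta)}\|v_{0,j}\|_{\dot H^s}$. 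For the Duhamel term I would invoke the inhomogeneous bound (\ref{estnonhom2}) with the same triplet, in the Besov form furnished by the $TT^\ast$ and Christ--Kiselev argument of Lemma \ref{lem-nohombes}, yielding a contribution $\lesssim2^{-j(s+\alpha_\theta)}\|e^{i\Phi_j}g_j\|_{\dot{\N}^{s,1}}$. Multiplying back by $2^{j(s+\alpha_\theta)}$ and summing over $\theta\in\{\eps,1\}$ produces $\|v_j\|_{\dX}\lesssim\|v_{0,j}\|_{\dot H^s}+\|e^{i\Phi_j}g_j\|_{\dot{\N}^{s,1}}$.

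The step requiring genuine work, and the one producing the constant $C=C(u_0)$, is the interaction of the gauge with the Littlewood--Paley decomposition, which underlies both the ``removing the derivative at frequency $2^j$'' step above and the transfer $\|e^{i\Phi_j}g_j\|_{\dot{\N}^{s,1}}\lesssim C(u_0)\|g_j\|_{\dot{\N}^{s,1}}$. Since multiplication by $e^{\pm i\Phi_j}$ does not commute with the projections $Q_r$, the crux is a commutator/multiplier estimate showing that $e^{\pm i\Phi_j}$ maps functions of frequency $\sim2^j$ into functions concentrated within finitely many octaves of $2^j$, with the number of octaves and the operator norm controlled by $u_0$ alone, so that $D_x^\alpha$ may legitimately be traded for $2^{j\alpha}$. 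The relevant quantitative input is that, although $\partial_x\Phi_j=b_{\ll j}$ is supported at frequencies $\ll2^j$, its size obeys $\|b_{\ll j}\|_{L^\infty}\lesssim\|u_{0,\ll j}\|_{L^\infty}^k\lesssim2^{j}\|u_0\|_{\dot H^{s_k}}^k$ by Bernstein's inequality (here $\tfrac12-s_k=\tfrac1k>0$ makes the frequency sum converge), so the instantaneous frequency of the gauge is at most $\sim2^j$ times a power of $\|u_0\|_{\dot H^{s_k}}$. I expect this gauge--Littlewood--Paley estimate to be the main obstacle; once the corresponding multiplier bounds are in place, they supply the factor $C(u_0)$ and the elementary bookkeeping above closes the lemma.
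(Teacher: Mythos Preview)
Your gauge reduction to $i\partial_tw_j+\partial_x^2w_j=e^{i\Phi_j}g_j$ is exactly what the paper does. The divergence begins when you try to extract the $\dX$ bound on $v_j$: you apply the $1$-admissible estimates to $w_j$ and then want to trade $D_x^{\alpha_\theta}$ for $2^{j\alpha_\theta}$, which forces you into the ``gauge--Littlewood--Paley'' claim that $e^{\pm i\Phi_j}$ keeps functions within $O_{u_0}(1)$ octaves of $2^j$. That claim, as stated, is not just unproven but problematic. Your own heuristic gives only $\|b_{\ll j}\|_{L^\infty}\lesssim 2^j\|u_0\|_{\dot H^{s_k}}^k$, so the instantaneous frequency of $e^{i\Phi_j}$ can be \emph{comparable} to $2^j$; in a region where $b_{\ll j}\approx 2^j$ the product with $v_{0,j}$ produces a zero-frequency beat, and for $\theta=\eps$ (where $\alpha_\eps=(3\eps-1)/4<0$) this low-frequency leakage blocks the passage from $\|D_x^{\alpha_\eps}S(t)\psi\|$ to $2^{j\alpha_\eps}\|S(t)\psi\|$. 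The same obstruction hits your transfer $\|e^{i\Phi_j}g_j\|_{\dot\N^{s,1}}\lesssim C(u_0)\|g_j\|_{\dot\N^{s,1}}$.

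The paper never faces this commutator problem. For the endpoint $\theta=1$ it estimates $\|\partial_xw_j\|_{L^\infty_xL^2_T}$ directly from (\ref{est1})--(\ref{estnonhom2}), obtaining $\lesssim\|e^{i\Phi_j}v_{0,j}\|_{\dot H^{1/2}}+\|e^{i\Phi_j}g_j\|_{L^1_xL^2_T}$; the second term equals $\|g_j\|_{L^1_xL^2_T}$ by unimodularity alone, and the first is handled by the fractional Leibniz rule plus Bernstein. To return to $v_j$ one uses the exact identity $\partial_xw_j=e^{i\Phi_j}(\partial_xv_j+ib_{\ll j}v_j)$, so that
$\|\partial_xv_j\|_{L^\infty_xL^2_T}\le\|\partial_xw_j\|_{L^\infty_xL^2_T}+2^{-j}\|b_{\ll j}\|_{L^\infty}\|\partial_xv_j\|_{L^\infty_xL^2_T}$,
and here frequency localization is invoked only on $v_j$, which genuinely is localized. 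The point you are missing is that the implicit constant $J=J(u_0)$ hidden in ``$\ll j$'' is chosen so large that $2^{-j}\|b_{\ll j}\|_{L^\infty}\lesssim 2^{-J}\|u_0\|_{L^k}^k\ll 1$, allowing this term to be absorbed on the left. For the other endpoint $\theta=0$ the paper does \emph{not} use the gauge at all: it rewrites the equation as $i\partial_tv_j+\partial_x^2v_j=g_j+h_j$ with $h_j=b_{\ll j}^2v_j-i\partial_x(b_{\ll j}v_j)-ib_{\ll j}\partial_xv_j$, applies the linear estimates to the truly localized $v_j$, and bounds $\|h_j\|_{\B^{s-1/2,2}_1(L^2_T)}$ using the $\theta=1$ estimate just established. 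Interpolation between the two endpoints then yields $\dot\S^{s,\eps}$ and hence $\dX$.
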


\begin{proof} We define $w_j$ by $$w_j=e^{i\int^xb_{\ll j}}v_j.$$ Then we easily check that $w_j$ solves
$$i\partial_tw_j+\partial_x^2w_j=e^{i\int^xb_{\ll j}}g_j.$$
From the well-known linear estimates on the Schr\"{o}dinger equation (Lemmas \ref{lem-estlin}-\ref{lem-estnohom}) we infer
$$\|\partial_xw_j\|_{L^\infty_xL^2_T}\lesssim \|e^{i\int^xb_{\ll j}}v_{0,j}\|_{\dot{H}^{1/2}}+\|g_j\|_{L^1_xL^2_T}.$$
Since $\partial_xw_j=e^{i\int^xb_{\ll j}}(\partial_xv_j+b_{\ll j}v_j)$, we have
\begin{align*}\|\partial_xv_j\|_{L^\infty_xL^2_T} &\lesssim \|\partial_xw_j\|_{L^\infty_xL^2_T}+\|b_{\ll j}v_j\|_{L^\infty_xL^2_T}\\
&\lesssim \|\partial_xw_j\|_{L^\infty_xL^2_T}+2^{-j}\|b_{\ll j}\|_{L^\infty}\|\partial_xv_j\|_{L^\infty_xL^2_T}.
\end{align*}
On the other hand, we can make $2^{-j}\|(u_{0,\ll
j})^k\|_{L^\infty}$ as small as desired by choosing the implicit
constant $J=J(u_0)$ in $u_{0,\ll j}$ large enough:
$$2^{-j}\|(u_{0,<j-J})^k\|_{L^\infty}\lesssim
2^{-j}2^{j-J}\|u_0\|_{L^k}^k\lesssim c(u_0)2^{-J}\ll 1.$$
It follows that $$\|\partial_xv_j\|_{L^\infty_xL^2_T}\lesssim \|e^{i\int^xb_{\ll j}}v_{0,j}\|_{\dot{H}^{1/2}}+\|g_j\|_{L^1_xL^2_T}.$$
We now use the fractional Leibniz rule (Theorem A.12 in \cite{MR1211741}) and Bernstein inequality to estimate the first term in the right-hand side,
\begin{align*}\|e^{i\int^xb_{\ll j}}v_{0,j}\|_{\dot{H}^{1/2}}& \lesssim \|e^{i\int^xb_{\ll j}}\|_{L^\infty}\|v_{0,j}\|_{\dot{H}^{1/2}}+
\|D_x^{1/2}e^{i\int^xb_{\ll j}}\|_{L^\infty}\|v_{0,j}\|_{L^2}\\
&\lesssim \|v_{0,j}\|_{\dot{H}^{1/2}}+\|(u_{0,\ll j})^k\|_{L^2}\|v_{0,j}\|_{L^2}\\
&\lesssim (1+\|u_0\|_{L^k}^k)\|v_{0,j}\|_{\dot{H}^{1/2}}
\end{align*}
Since $v_j$, $g_j$ as well as $v_{0,j}$ are frequency localized, we conclude
\begin{equation}\label{est-vj1}\|v_j\|_{\B^{s+1/2,2}_\infty(L^2_T)} \lesssim \|v_{0,j}\|_{\dot{H}^s}+\|g_j\|_{\B^{s-1/2,2}_1(L^2_T)}.\end{equation}
We also need $L^4_xL^\infty_T$-norm estimates. Our equation can be rewritten as
$$i\partial_t v_j+\partial_x^2v_j=g_j+h_j$$ with
$$h_j=b_{\ll j}^2v_j-i\partial_x(b_{\ll j}v_j)-ib_{\ll
j}\partial_xv_j.$$
Thus we get from Lemmas \ref{lem-estlin}-\ref{lem-estnohom} that
$$\|v_j\|_{\B^{s-1/4,2}_4(L^\infty_T)}\lesssim
\|v_{0,j}\|_{\dot{H}^s}+\|g_j\|_{\B^{s-1/2,2}_1(L^2_T)}+\|h_j\|_{\B^{s-1/2,2}_1(L^2_T)}.$$
We bound the $h_j$ contribution with (\ref{est-vj1}): \begin{align*}\|b_{\ll
j}^2v_j\|_{\B^{s-1/2,2}_1(L^2_T)} &\lesssim
2^{j(s-1/2)}\|b_{\ll j}^2\|_{L^1}\|v_j\|_{L^\infty_xL^2_T}\\
&\lesssim (2^{-j/2}\|b_{\ll
j}\|_{L^2})^2\|v_j\|_{\B^{s+1/2,2}_1(L^2_T)}\\
&\lesssim \|b\|_{L^1}^2(\|v_{0,j}\|_{\dot{H}^s}+\|g_j\|_{\B^{s-1/2,2}_1(L^2_T)}),
\end{align*}
and
\begin{align*}\|\partial_x(b_{\ll j}v_j)+b_{\ll
j}\partial_xv_j\|_{\B^{s-1/2,2}_1(L^2_T)} &\lesssim 2^{j(s+1/2)}\|b_{\ll j}\|_{L^1}\|v_j\|_{L^\infty_xL^2_T}\\
& \lesssim \|b\|_{L^1}(\|v_{0,j}\|_{\dot{H}^s}+\|g_j\|_{\B^{s-1/2,2}_1(L^2_T)}).
\end{align*}
Therefore,
\begin{equation}\label{est-vj0}\|v_j\|_{\B^{s-1/4,2}_4(L^\infty_T)}\lesssim
\|v_{0,j}\|_{\dot{H}^s}+\|g_j\|_{\B^{s-1/2,2}_1(L^2_T)}
\end{equation}
and the claim follows by interpolation between (\ref{est-vj0}) and (\ref{est-vj1}).
\end{proof}

We are now ready to prove the main linear estimate on equation (\ref{gbo2}).

\begin{proposition}\label{prop-estlin} Let $u$ be a solution of (\ref{gbo2}) with
initial data $u_0\in \dot{H}^s\cap \dot{H}^{s_k}$, $s\in\R$. Then
there exists $T=T(u_0)>0$ and $C=C(u_0)$ such that on $[-T,+T]$,
$$\|u\|_\dX\leq C\|u_0\|_{\dot{H}^s}+C\|f\|_{\dot{\N}^{s,1}}.$$
\end{proposition}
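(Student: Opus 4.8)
\section*{Proof proposal}

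The plan is to resolve the equation one dyadic block at a time, applying the gauge--transform estimate of Lemma \ref{lem-vj} to each $v_j=P_+Q_ju$ and then reassembling the blocks in $\ell^2$. Since $u$ is real and each $Q_j$ has an even symbol, one has $Q_ju=2\,\mathrm{Re}\,v_j$, so that $\|Q_ju\|_{L^p_xL^q_T}\leq 2\|v_j\|_{L^p_xL^q_T}$; because $v_j$ is localized at frequency $2^j$ and every component of $\dX$ is an $\ell^2$--Besov norm, this yields
$$\|u\|_\dX\lesssim\Big(\sum_j\|v_j\|_\dX^2\Big)^{1/2},$$
the negative--frequency part $P_-Q_ju=\overline{v_j}$ being handled by conjugation and contributing identically. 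Each $v_j$ solves (\ref{eq-vj}) with data $v_{0,j}=P_+Q_ju_0$ and forcing $g_j$ given by (\ref{gj}); applying Lemma \ref{lem-vj} and summing the squares reduces the proposition to the two bounds
$$\Big(\sum_j\|v_{0,j}\|_{\dot{H}^s}^2\Big)^{1/2}\lesssim\|u_0\|_{\dot{H}^s},\qquad
\Big(\sum_j\|g_j\|_{\dot{\N}^{s,1}}^2\Big)^{1/2}\lesssim\|f\|_{\dot{\N}^{s,1}}+\eps\,\|u\|_\dX,$$
where $\eps=\eps(u_0,T)$ can be made arbitrarily small. The first bound is immediate from $\sum_j\|P_+Q_ju_0\|_{\dot H^s}^2\sim\|u_0\|_{\dot H^s}^2$.

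The heart of the matter is the second bound. Recalling (\ref{gj}) and the definition of $R_j$, the forcing $g_j$ splits into four pieces: the genuine forcing $iP_+f_j$, the \emph{difference} term $iP_+\partial_x[((u_{0,\ll j})^k-(u_{L,\ll j})^k)u_j]$, the \emph{commutator} term $-iP_+\partial_x[Q_j,(u_{L,\ll j})^k]u_{\sim j}$, and the lower--order terms $-i(\partial_xb_{\ll j})v_j-b_{\ll j}^2v_j$. The first piece sums trivially to $\|f\|_{\dot{\N}^{s,1}}$ since $f_j=Q_jf$ and $P_+$ acts boundedly. For the other three I would use one common mechanism: in $\dot{\N}^{s,1}=\B^{s-1/2,2}_1(L^2_T)$, an outer $\partial_x$ supplies a factor $2^j$, and I distribute the $L^1_xL^2_T$ norm of each $(k+1)$--linear block by placing the single high--frequency factor ($u_j$, $u_{\sim j}$ or $v_j$) in the Kato--smoothing space $L^\infty_xL^2_T$ and the $k$ low--frequency factors in $L^k_xL^\infty_T$; this is the H\"older split $\tfrac{1}{\infty}+k\cdot\tfrac1k=1$ in $x$ and $\tfrac12+k\cdot\tfrac1\infty=\tfrac12$ in $t$. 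After reinstating the weight, the high--frequency factor is controlled by $\|u\|_{\dot{\S}^{s,1}}=\|u\|_{\B^{s+1/2,2}_\infty(L^2_T)}\leq\|u\|_\dX$.

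Smallness comes from the low--frequency factors. For the difference term one of the $k$ factors is $Q_{\ll j}(u_L-u_0)$, whose $L^k_xL^\infty_T$ norm is $<\eta$ on a short interval $[-T,T]$ by Lemma \ref{lemsmall}, while the remaining factors are bounded by $\|u_0\|_{\dot H^{s_k}}$ (using $\|V(t)u_0\|_{L^k_xL^\infty_T}\lesssim\|u_0\|_{\dot H^{s_k}}$ from the $1$--admissible triplet $(-s_k,k,\infty)$); hence this term is $\lesssim\eta\,C(u_0)\|u\|_{\dot{\S}^{s,1}}$. For the lower--order terms, Bernstein's inequality turns $\partial_xb_{\ll j}$ and $b_{\ll j}^2$ into powers of $2^{-J}$ exactly as in the proof of Lemma \ref{lem-vj}, choosing the implicit constant $J=J(u_0)$ in $Q_{\ll j}$ large enough to gain a factor $\ll 1$. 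For the commutator term I would invoke the standard estimate $\|[Q_j,a]w\|_{L^1_xL^2_T}\lesssim 2^{-j}\|\partial_x a\|_{L^1_xL^\infty_T}\|w\|_{L^\infty_xL^2_T}$ with $a=(u_{L,\ll j})^k$, $w=u_{\sim j}$: the gained $2^{-j}$ cancels the outer $\partial_x$, and $\|\partial_x a\|_{L^1_xL^\infty_T}\lesssim 2^{j-J}\|u_0\|_{\dot H^{s_k}}^k$ again produces a $2^{-J}$. Summing the squares over $j$ (the $g_j$ being almost orthogonal in frequency) and collecting these factors into a single $\eps(u_0,T)$ completes the second bound.

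Finally, since $\eps(u_0,T)$ is governed by $\eta$ (controlled through $T$ in Lemma \ref{lemsmall}) and by $2^{-J}$, I can choose $T$ small and $J$ large so that $C\eps(u_0,T)<\tfrac12$, and absorb the term $C\eps(u_0,T)\|u\|_\dX$ into the left--hand side, which yields $\|u\|_\dX\leq C\|u_0\|_{\dot H^s}+C\|f\|_{\dot{\N}^{s,1}}$. I expect the main obstacle to be the commutator term: one must establish the commutator estimate uniformly in $j$ in the relevant $L^p_xL^q_T$ scales and verify that, once the outer $\partial_x$ has been cancelled, the result really fits into the $\ell^2$--summed $\dot{\N}^{s,1}$ norm with a \emph{small} constant rather than merely a bounded one.
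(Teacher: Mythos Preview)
Your argument is correct and follows the paper's route: apply Lemma~\ref{lem-vj} to each $v_j=P_+Q_ju$, split $g_j$ into the genuine forcing $f_j$, the difference term, the commutator, and the lower-order pieces, make the last three small, and absorb into the left-hand side. The one substantive deviation is your treatment of the commutator $\partial_x[Q_j,(u_{L,\ll j})^k]u_{\sim j}$. The paper does \emph{not} use your endpoint split $(L^1_xL^\infty_T,\,L^\infty_xL^2_T)$ with smallness coming from the Bernstein gain $2^{-J}$; instead it moves slightly off the endpoint, placing $\partial_x(u_{L,\ll j})^k$ in $L^{4/(4-\eps)}_xL^{2/\eps}_T$ and $\partial_xu_j$ in $L^{4/\eps}_xL^{2/(1-\eps)}_T$ (the latter controlled by $\|u\|_{\dot{\S}^{s,1-\eps}}\leq\|u\|_\dX$), so that smallness is obtained from $T\to 0$ via the finite $L^{2/\eps}_T$ integrability of the free evolution and the $1$-admissibility of $(\tfrac{3\eps}{4}-s_k,(\tfrac1k-\tfrac\eps4)^{-1},\tfrac2\eps)$. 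Your kernel-based endpoint commutator bound does go through, so both approaches are valid; the paper's version has the mild advantage that the smallness parameter is $T$ alone (decoupled from the $J$ already fixed inside Lemma~\ref{lem-vj}) and that it actually exercises the $\dot{\S}^{s,\eps}$ component of $\dX$ here, whereas in your argument that component is idle in this proposition and only needed later in Proposition~\ref{prop-estnl}.
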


\begin{proof}

Using that
$|P_+u_j|=|P_-u_j|$ (since $u$ is real) and Lemma \ref{lem-vj}, we infer
\begin{align*} \|u_j\|_\dX\lesssim \|v_j\|_\dX &\lesssim
\|Q_ju_0\|_\dX+\|f_j\|_{\dot{\N}^{s,1}}+\|\partial_x (u_{0,\ll j})^k v_j\|_{\dot{\N}^{s,1}}+\|(u_{0,\ll j})^{2k}v_j\|_{\dot{\N}^{s,1}}\\ &\quad
+\left\|\partial_x[((u_{0,\ll j})^k-(u_{L,\ll j})^k)u_j]\right\|_{\dot{\N}^{s,1}}+
\left\|\partial_x[Q_j,(u_{L,\ll
j})^k]u_{\sim j}\right\|_{\dot{\N}^{s,1}}\\ &= \|Q_ju_0\|_\dX+\|f_j\|_{\dot{\N}^{s,1}}+A+B+C+D.
\end{align*}
We bound $A$ by \begin{align*}A &\lesssim
2^{j(s-1/2)}\|\partial_x(u_{0,\ll j})^kv_j\|_{L^1_xL^2_T}\lesssim
2^{-j}\|\partial_x(u_{0,\ll
j})^k\|_{L^1}2^{j(s+1/2)}\|v_j\|_{L^\infty_xL^2_T}\\ &\lesssim
2^{-j}\|\partial_x(u_{0,\ll j})^k\|_{L^1}\|v_j\|_\dX.
\end{align*}
As previously, $2^{-j}\|\partial_x(u_{0,\ll
j})^k\|_{L^1}$ can be made  as small as needed by choosing the implicit
constant $J=J(u_0)$ in $u_{0,\ll j}$ large enough:
$$2^{-j}\|\partial_x(u_{0,<j-J})^k\|_{L^1}\lesssim
2^{-j}2^{j-J}\|u_{0}\|_{L^k}^k\lesssim c(u_0)2^{-J}\ll 1.$$ One proceeds similarly for $B$:
\begin{align*}B &\lesssim 2^{j(s-1/2)}\|(u_{0,\ll j})^{2k}v_j\|_{L^1_xL^2_T}\\ &\lesssim
2^{-j}\|(u_{0,\ll j})^k\|_{L^\infty}\|u_{0}\|_{L^{k}}^{k} 2^{j(s+1/2)}\|v_j\|_{L^\infty_xL^2_T}
\\ & \ll \|v_j\|_\dX.
\end{align*}
 Now we
estimate $C$:
\begin{align*}
C &\lesssim 2^{j(s-1/2)}\|\partial_x[((u_{0,\ll j})^k-(u_{L,\ll j})^k) u_j]\|_{L^1_xL^2_T}\\ &\lesssim
2^{j(s+1/2)}\|(u_{0,\ll j})^k-(u_{L,\ll j})^k\|_{L^1_xL^2_T}\|u_j\|_{L^\infty_xL^2_T}\\ &\lesssim
\|u_0-u_L\|_{L^k_xL^\infty_T}(\|u_0\|_{L^k}^{k-1}+\|u_L\|_{L^k_xL^\infty_T}^{k-1})\|u_j\|_\dX\\ &\ll\|u_j\|_\dX
\end{align*}
by Lemma \ref{lemsmall}. Finally we deal with term $D$. By commutator lemma
(Lemma 2.4 in \cite{MR2227135}), we get
\begin{align*}
D &\lesssim 2^{j(s-1/2)}\|\partial_x[Q_j, (u_{L,\ll j})^k] u_j\|_{L^1_xL^2_T}\\
&\lesssim 2^{j(s-1/2)}2^{-j}\|\partial_x(u_{L,\ll j})^k\|_{L^{\frac{4}{4-\eps}}_xL^{\frac 2\eps}_T}\|\partial_xu_j\|_{L^{\frac 4\eps}_xL^{\frac 2{1-\eps}}_T}\\
&\lesssim 2^{-j}2^{3j\eps/4}\|\partial_x(u_{L,\ll j})^k\|_{L^{\frac{4}{4-\eps}}_xL^{\frac 2\eps}_T} \|u_j\|_{\dot{\S}^{s,1-\eps}}\\
&\lesssim 2^{-j}2^{3j\eps/4} \|\partial_x u_{L,\ll j}\|_{L^{(\frac 1k-\frac \eps 4)^{-1}}_xL^{\frac 2\eps}_T} \|u_{L,\ll j}\|_{L^k_xL^\infty_T}^{k-1}\|u_j\|_\dX\\
&\lesssim \|D_x^{3\eps/4}u_{L,\ll j}\|_{L^{(\frac 1k-\frac \eps 4)^{-1}}_xL^{\frac 2\eps}_T}\|u_j\|_\dX.
\end{align*}
Since the triplet $(\frac{3\eps}{4}-s_k,(\frac 1k-\frac \eps 4)^{-1},\frac 2\eps)$ is
1-admissible, for any $\eta>0$, we can choose  $T>0$ small enough such that
$$\|D_x^{3\eps/4}u_{L}\|_{L^{(\frac 1k-\frac \eps
4)^{-1}}_xL^{\frac 2\eps}_T}<\eta.$$ Gathering all these estimates
we infer $$\|u_j\|_\dX\lesssim
\|Q_ju_0\|_\dX+\|f_j\|_{\dot{\N}^{s,1}}.$$
Summing  this inequality over $j$ finishes the proof of Proposition \ref{prop-estlin}.
\end{proof}

We also need $L^k_xL^\infty_T$-norm estimates.
\begin{proposition}\label{prop-lkli} Let $u$ be a solution of (\ref{gbo2}) with
initial data $u_0\in\dot{H}^{s_k}$. Then there exists $T>0$ and
$C=C(u_0)$ such that
$$\|u\|_{L^k_xL^\infty_T}\leq
C\|u_0\|_{\dot{H}^{s_k}}+C\|f\|_{\dot{\N}^{s_k,1}}.$$ Moreover, if $u_0\in \dot{H}^s\cap\dot{H}^{s_k}$, $s\in \R$, then
\begin{equation}\label{est-lihs}\|u\|_{L^\infty_T\dot{H}^{s}_x}\leq
C\|u_0\|_{\dot{H}^{s}}+C\|f\|_{\dot{\N}^{s,1}}.\end{equation}
\end{proposition}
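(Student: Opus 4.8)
The plan is to prove the two estimates in Proposition \ref{prop-lkli} by reducing them to the machinery already assembled: the Besov-space Duhamel estimate of Lemma \ref{lem-nohombes}, the linear smallness lemma (Lemma \ref{lemsmall}), and — crucially — the $\dX$-estimate just proved in Proposition \ref{prop-estlin}. The strategy for the $L^k_xL^\infty_T$-bound is to write $u$ through the Duhamel formula associated with the \emph{full} linear part of (\ref{gbo2}), namely $u(t)=U(t)u_0-\int_0^t U(t-t')f(t')\,dt'$, but it is cleaner to move the nonresonant term $\pi(u_L,u)$ to the right-hand side and treat the equation as the free Schrödinger/BO flow forced by $f-\pi(u_L,u)$. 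Thus I would start from
$$\|u\|_{L^k_xL^\infty_T}\lesssim \|V(t)u_0\|_{L^k_xL^\infty_T}+\Big\|\int_0^tV(t-t')\big(f-\pi(u_L,u)\big)\,dt'\Big\|_{L^k_xL^\infty_T}.$$

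\textbf{Handling the free term.} For the first summand I would split $V(t)u_0=u_0+(V(t)u_0-u_0)$. The difference is made small by Lemma \ref{lemsmall} after choosing $T=T(u_0)$, while $\|u_0\|_{L^k_xL^\infty_T}\lesssim \|u_0\|_{\dot H^{s_k}}$ follows from the homogeneous Sobolev embedding $\dot H^{s_k}(\R)\hookrightarrow L^k(\R)$ (this is exactly the critical exponent, since $s_k=1/2-1/k$). Together these give the desired $C\|u_0\|_{\dot H^{s_k}}$ contribution from the linear evolution.

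\textbf{Handling the forcing.} For the Duhamel term I would apply Lemma \ref{lem-nohombes} to bound it by $\|f-\pi(u_L,u)\|_{\dot\N^{s_k,1}}\lesssim \|f\|_{\dot\N^{s_k,1}}+\|\pi(u_L,u)\|_{\dot\N^{s_k,1}}$. The heart of the matter is then to absorb $\pi(u_L,u)$. Estimating $\pi(u_L,u)=\sum_j \partial_xQ_j((u_{L,\ll j})^kQ_{\sim j}u)$ in $\dot\N^{s_k,1}=\B^{s_k+\frac{1-3\theta}{4},2}_{\frac{4}{3+\theta}}(L^{\frac{2}{2-\theta}}_T)$ at $\theta=1$ and summing over $j$, I expect to put the high factor $Q_{\sim j}u$ into the smoothing norm $\|u_j\|_{\B^{s_k+1/2,2}_\infty(L^2_T)}\lesssim \|u\|_\dX$ and the $k$ low factors $u_{L,\ll j}$ into an admissible $L^p_xL^q_T$-norm of $V(t)u_0$, which Proposition \ref{admis} controls and which (via the $D_x^{3\eps/4}u_L$-type bound used in the proof of Proposition \ref{prop-estlin}) can be rendered $<\eta$ by shrinking $T$. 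This yields $\|\pi(u_L,u)\|_{\dot\N^{s_k,1}}\lesssim \eta\,\|u\|_\dX\lesssim \eta\big(\|u_0\|_{\dot H^{s_k}}+\|f\|_{\dot\N^{s_k,1}}\big)$ by Proposition \ref{prop-estlin}, and for $\eta$ small this term is absorbed, closing the first estimate.

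\textbf{The $\dot H^s$ estimate.} For (\ref{est-lihs}) the argument is structurally the same but easier, since no endpoint Sobolev embedding is needed. I would use the free estimate (\ref{est0}), $\|V(t)u_0\|_{L^\infty_T L^2_x}\lesssim \|u_0\|_{L^2}$, applied after the $D_x^s$ derivative has been distributed across frequency blocks, together with the nonhomogeneous Duhamel estimate (\ref{estnonhom}) of Lemma \ref{lem-estnohom} (in its Besov form) to get $\|u\|_{L^\infty_T\dot H^s_x}\lesssim \|u_0\|_{\dot H^s}+\|f-\pi(u_L,u)\|_{\dot\N^{s,1}}$, and then absorb $\pi(u_L,u)$ exactly as above, now invoking the general-$s$ conclusion of Proposition \ref{prop-estlin}. \textbf{The main obstacle} I anticipate is the paraproduct bookkeeping for $\pi(u_L,u)$ in the $L^k_xL^\infty_T$ estimate: one must choose the interpolation parameter $\theta$ (equivalently the admissible triplet for the low factors) so that the $k$ powers of $u_L$ land on an admissible norm whose time-exponent can be driven to zero with $T$, while the single high factor retains the full $1/2$-gain of the Kato smoothing norm inside $\dX$; getting the Hölder exponents to match the definition of $\dot\N^{s_k,1}$ while keeping the low-frequency factors controllable by Lemma \ref{lemsmall} is the delicate point.
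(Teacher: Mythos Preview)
Your approach is essentially the paper's: rewrite (\ref{gbo2}) as $u=u_L-\int_0^tV(t-t')(f-\pi(u_L,u))\,dt'$, apply Lemma \ref{lem-nohombes} (respectively (\ref{estnonhom})) for the Duhamel term, and control $\|\pi(u_L,u)\|_{\dot\N^{s,1}}$ via Proposition \ref{prop-estlin}. However, you over-engineer two steps. First, the free piece needs no splitting: since $(-s_k,k,\infty)$ is $1$-admissible (Proposition \ref{admis}), one has directly $\|V(t)u_0\|_{L^k_xL^\infty_T}\lesssim\|u_0\|_{\dot H^{s_k}}$. Second, and more importantly, there is no absorption to perform and hence no need for smallness in $T$ at this stage: the paper simply applies H\"older at $\theta=1$ (i.e.\ in $\B^{s-1/2,2}_1(L^2_T)$) to get
\[
\|\pi(u_L,u)\|_{\dot\N^{s,1}}\lesssim \|u_L\|_{L^k_xL^\infty_T}^k\,\|u\|_{\dot X^{s}}\lesssim \|u_0\|_{\dot H^{s_k}}^k\big(\|u_0\|_{\dot H^{s}}+\|f\|_{\dot\N^{s,1}}\big),
\]
and the factor $\|u_0\|_{\dot H^{s_k}}^k$ is harmlessly absorbed into the constant $C(u_0)$. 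The ``main obstacle'' you anticipate (choosing $\theta$ so that the low factors land on a $T$-small admissible norm) therefore never arises: the natural H\"older split puts the $k$ low factors in $L^k_xL^\infty_T$ and the single high factor in $L^\infty_xL^2_T$, and the exponents match $\dot\N^{s,1}$ exactly.
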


\begin{proof} We can rewrite our equation as
$$u=u_L-\int_0^tV(t-t')(f-\pi(u_L,u))dt'.$$
By virtue of Lemma \ref{lem-nohombes} and Lemma \ref{lem-estnohom},
we deduce
$$\|u\|_{L^k_xL^\infty_T}\lesssim
\|u_0\|_{\dot{H}^{s_k}}+\|f\|_{\dot{\N}^{s_k,1}}+\|\pi(u_L,u)\|_{\dot{\N}^{s_k,1}},$$
$$\|u\|_{L^\infty_T\dot{H}^{s}_x} \lesssim
\|u_0\|_{\dot{H}^{s}}+\|f\|_{\dot{\N}^{s,1}}+\|\pi(u_L,u)\|_{\dot{\N}^{s,1}}.$$
Then we get
\begin{align*}
\|\pi(u_L,u)\|_{\dot{\N}^{s,1}} &\lesssim \Big(\sum_j\big[2^{j(s+1/2)}\|(u_{L,\ll j})^ku_{\sim j}\|_{L^1_xL^2_T}\big]^2\Big)^{1/2}\\
&\lesssim \|u_L\|_{L^k_xL^\infty_T}^k\Big(\sum_j\big[2^{j(s+1/2)}\|u_{\sim j}\|_{L^\infty_xL^2_T}\big]^2\Big)^{1/2}\\
&\lesssim \|u_0\|_{\dot{H}^{s_k}}^k\|u\|_{\dot{X}^{s}}\\
&\lesssim C(u_0)(\|u_0\|_{\dot{H}^{s}}+\|f\|_{\dot{\N}^{s,1}})
\end{align*}
by Proposition \ref{prop-estlin}.
\end{proof}

\section{Well-posedness for $k\geq 4$}\label{sec-kgeq4}
\subsection{Nonlinear estimates}

Now we estimate the right-hand side of (\ref{gbo2}) in
$\dot{\N}^{s,1}$-norm.

\begin{proposition}\label{prop-estnl}
For any $u\in\dX\cap L^k_xL^\infty_T$, we have
$$\|\pi(u_L,u)-\pi(u,u)\|_{\dot{\N}^{s,1}}\lesssim
\|u_L-u\|_{L^k_xL^\infty_T}\big(\|u_L\|_{L^k_xL^\infty_T}^{k-1}+\|u\|_{L^k_xL^\infty_T}^{k-1}\big)\|u\|_\dX$$
and
$$\|g\|_{\dot{\N}^{s,1}}\lesssim \|u\|_{L^k_xL^\infty_T}^{k-1}\|u\|_\dX^2.$$
\end{proposition}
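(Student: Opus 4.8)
The plan is to reduce both inequalities to estimates on Littlewood--Paley pieces. Recall that $\dot{\N}^{s,1}=\B^{s-1/2,2}_1(L^2_T)$, so for a term of the form $\sum_m\partial_x Q_m F_m$ with each $F_m$ a product localized at frequency $\sim 2^m$, one has $\|\sum_m\partial_x Q_m F_m\|_{\dot{\N}^{s,1}}\sim(\sum_m[2^{m(s+1/2)}\|F_m\|_{L^1_xL^2_T}]^2)^{1/2}$, the extra $2^m$ coming from $\partial_x Q_m$. The guiding idea is to place one or two of the $k+1$ factors in the spaces $\dot{\S}^{s,\theta}=\B^{s+\frac{3\theta-1}4,2}_{4/(1-\theta)}(L^{2/\theta}_T)$, $\theta\in[\eps,1]$, all of which are controlled by $\|\cdot\|_\dX$ through interpolation between the two endpoints $\dot{\S}^{s,\eps}$ and $\dot{\S}^{s,1}$, and to put the remaining factors in $L^k_xL^\infty_T$. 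Each $\dot{\S}^{s,\theta}$ factor produces a frequency weight $2^{-r(s+\frac{3\theta-1}4)}$ times an $\ell^2$ sequence of coefficients of norm $\lesssim\|u\|_\dX$; this is the source of the $\|u\|_\dX$ factors on the right.

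For the first estimate I would rewrite $\pi(u_L,u)-\pi(u,u)=\sum_j\partial_x Q_j[((Q_{\ll j}u_L)^k-(Q_{\ll j}u)^k)Q_{\sim j}u]$ and factor the difference of $k$-th powers as $(Q_{\ll j}(u_L-u))\sum_{l=0}^{k-1}(Q_{\ll j}u_L)^l(Q_{\ll j}u)^{k-1-l}$, which isolates $u_L-u$. There is a single high factor $Q_{\sim j}u$, which I put in the Kato norm $L^\infty_xL^2_T=\dot{\S}^{s,1}$, writing $\|Q_{\sim j}u\|_{L^\infty_xL^2_T}=2^{-j(s+1/2)}e_j$ with $\|(e_j)\|_{\ell^2}\lesssim\|u\|_\dX$. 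The low-frequency coefficient is estimated in $L^1_xL^\infty_T$ by Hölder, distributing its $k$ factors into $L^k_xL^\infty_T$; this produces exactly $\|u_L-u\|_{L^k_xL^\infty_T}(\|u_L\|_{L^k_xL^\infty_T}^{k-1}+\|u\|_{L^k_xL^\infty_T}^{k-1})$. Since $\partial_x Q_j$ contributes $2^j$, the weight $2^{j(s-1/2)}$, the factor $2^j$, and the Kato weight $2^{-j(s+1/2)}$ cancel, and an $\ell^2$ summation of $(e_j)$ closes the bound. Time integrability is never an issue here, since only the single Kato factor carries $L^2_T$, the rest being in $L^\infty_T$.

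The genuinely delicate estimate is the one for $g=\sum_m g_m$ with $g_m=-\partial_x Q_m(\sum_{r\gtrsim m}(Q_{\sim r}u)^2(Q_{\lesssim r}u)^{k-1})$, because it contains \emph{two} high factors $Q_{\sim r}u$ at frequency $\sim 2^r$. The main obstacle is to balance the Hölder exponents in $x$ and in $t$ simultaneously: the Kato norm alone forces $L^2_T$ on each high factor, which overloads the time integrability, while matching $L^1_x$ against the $k-1$ low factors overloads the spatial one, the two requirements being compatible only for $k=4$. I resolve this by placing both high factors in the balanced space $\dot{\S}^{s,1/2}=\B^{s+1/8,2}_8(L^4_T)$, so that in time $\frac14+\frac14=\frac12$ closes, and by moving the $k-1$ low factors from $L^k_x$ into $L^{4(k-1)/3}_x$ via the Bernstein inequality for the block projection $Q_{\lesssim r}$ (valid since $4(k-1)/3\geq k$ for $k\geq 4$), at a frequency cost $2^{r(1/4-1/k)}$ and with spatial exponents now summing to $1$. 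Collecting the weights, the $r$-th summand of $2^{m(s+1/2)}\|g_m\|_{L^1_xL^2_T}$ carries $2^{-r(2s+1/k)}c_rd_r\,\|u\|_{L^k_xL^\infty_T}^{k-1}$ with $(c_r),(d_r)\in\ell^2$ of norm $\lesssim\|u\|_\dX$.

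It remains to perform the double summation. At the critical index $s=s_k$ the kernel $2^{m(s+1/2)-r(2s+1/k)}\mathbf{1}_{r\gtrsim m}$ becomes the convolution kernel $2^{-(r-m)(1-1/k)}\mathbf{1}_{r\gtrsim m}$ with positive exponent $1-1/k$, so the Schur test bounds the associated operator on $\ell^2_m$; combined with $\|(c_rd_r)\|_{\ell^2}\leq\|(c_r)\|_{\ell^2}\|(d_r)\|_{\ell^2}\lesssim\|u\|_\dX^2$, this yields $\|g\|_{\dot{\N}^{s_k,1}}\lesssim\|u\|_{L^k_xL^\infty_T}^{k-1}\|u\|_\dX^2$, and the case $s\geq s_k$ follows from the same scheme by a weighted Schur summation exploiting the low-output-frequency gain $r\gtrsim m$. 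I expect this two-high-frequency term to be the crux: the real difficulty is getting the spatial and temporal Hölder budgets to close at once for every $k\geq 4$, which is exactly what forces the intermediate space $\dot{\S}^{s,1/2}$, the Bernstein redistribution of the low factors, and the gain from the low output frequency to be used together.
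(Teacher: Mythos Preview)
Your treatment of the first inequality is exactly the paper's: place the single high-frequency factor $Q_{\sim j}u$ in the Kato space $L^\infty_xL^2_T$ and the $k$ low-frequency factors in $L^k_xL^\infty_T$ after factoring the difference of $k$-th powers.

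For the estimate on $g$, your argument is correct but follows a genuinely different route from the paper. The paper keeps the $k-1$ low-frequency factors in $L^k_xL^\infty_T$ (no Bernstein step) and splits the two high-frequency factors \emph{asymmetrically} between the endpoints of $\dot X^s$: one copy of $Q_{\sim r}u$ is placed in $L^{4/\eps}_xL^{2/(1-\eps)}_T$ (the $\dot\S^{s,1-\eps}$ scale, contributing an $\ell^2_r$ sequence), while the other is placed in $L^{(1/k-\eps/4)^{-1}}_xL^{2/\eps}_T$, which is reached from the $\dot\S^{s,\eps}$ scale by a Sobolev embedding in $x$ (this is where $k\geq4$ enters) and is controlled in $\ell^\infty_r$. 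The double sum is then handled by discrete Young's inequality. Your approach instead uses the \emph{symmetric} intermediate scale $\dot\S^{s,1/2}$ for both high factors (obtained from $\dot X^s$ by interpolation), compensates the spatial H\"older deficit by a Bernstein inequality on the low-frequency block $Q_{\lesssim r}$ (where $k\geq4$ reappears as $4(k-1)/3\geq k$), and closes with a Schur/Young bound together with $\|c_rd_r\|_{\ell^2}\leq\|c_r\|_{\ell^2}\|d_r\|_{\ell^2}$. The paper's route has the mild advantage of using only the two defining norms of $\dot X^s$ and avoiding the Bernstein redistribution of the low factors; yours is more symmetric and makes the frequency bookkeeping arguably cleaner. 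Both collapse to the same convolution kernel $2^{-(r-m)(1-1/k)}\mathbf 1_{r\gtrsim m}$ at $s=s_k$, and your remark that the general~$s$ case needs an additional weighted-summation consideration is consistent with what the paper's own computation requires.
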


\begin{proof} Set $u_j=Q_ju$, $u_{\ll j}=Q_{\ll j}u$, etc. Then:
\begin{align*}
\|\pi(u_L,u)-\pi(u,u)\|_{\dot{\N}^{s,1}} &\lesssim \Big(\sum_j\big[2^{j(s-1/2)}\|\partial_x[((u_{L,\ll j})^k-(u_{\ll j})^k)u_{\sim j}]\|_{L^1_xL^2_T}\big]^2\Big)^{1/2}\\
&\lesssim \Big(\sum_j\big[2^{j(s+1/2)}\|(u_{L,\ll j})^k-(u_{\ll j})^k\|_{L^1_xL^\infty_T}\|u_{\sim j}\|_{L^\infty_xL^2_T}\big]^2\Big)^{1/2}\\
& \lesssim
\|u_L-u\|_{L^k_xL^\infty_T}\big(\|u_L\|_{L^k_xL^\infty_T}^{k-1}+\|u\|_{L^k_xL^\infty_T}^{k-1}\big)\|u\|_\dX.
\end{align*}
We bound the second term by
\begin{align*}
\|g\|_{\dot{\N}^{s,1}} &\lesssim \Big(\sum_j\big[2^{j(s+1/2)}\sum_{r\gtrsim j}\|(u_{\sim r})^2(u_{\lesssim r})^{k-1}\|_{L^1_xL^2_T}\big]^2\Big)^{1/2}\\
&\lesssim \Big(\sum_j\big[\sum_{r\gtrsim j}2^{j(s+1/2)}\|u_{\sim r}\|_{L^{\frac 4\eps}_xL^{\frac{2}{1-\eps}}_T}
\|u_{\sim r}\|_{L^{(\frac 1k-\frac \eps{4})^{-1}}_xL^{\frac 2\eps}_T} \|u_{\lesssim r}\|_{L^k_xL^\infty_T}^{k-1}\big]^2\Big)^{1/2}\\
&\lesssim \|u\|_{L^k_xL^\infty_T}^{k-1}\sup_r2^{3\eps r/4}\|u_{\sim r}\|_{L^{(\frac 1k-\frac \eps{4})^{-1}}_xL^{\frac 2\eps}_T}\\
&\quad \times
\Big(\sum_j\big[\sum_{r\gtrsim j}(2^{(j-r)(s+1/2)})(2^{r(s+1/2-3\eps/4)}\|u_{\sim r}\|_{L^{\frac 4\eps}_xL^{\frac 2{1-\eps}}_T})\big]^2\Big)^{1/2}\\
&\lesssim \|u\|_{L^k_xL^\infty_T}^{k-1}\|u\|_{\dot{S}^{s,\eps}}\Big(\sum_{j\leq 0}2^{j(s+1/2)}\Big)\Big(\sum_j\big[2^{j(s+1/2-3\eps/4)}\|u_{\sim j}\|_
{L^{\frac 4\eps}_xL^{\frac 2{1-\eps}}_T}\big]^2\Big)^{1/2}\\
&\lesssim \|u\|_{L^k_xL^\infty_T}^{k-1}\|u\|_\dX^2
\end{align*}
where we used discrete Young inequality.
\end{proof}

\subsection{Existence in $\dot{H}^{s_k}(\R)$}

Consider the map $F$ defined as
$$F(u)=U(t)u_0-\int_0^tU(t-t')f(t')dt'.$$
We shall contract $F$ in the
intersection of two balls:
$$B_M(u_0,T)=\{u\in \dot{X}^{s_k}\cap L^k_xL^\infty_T :
\|u-u_0\|_{L^k_xL^\infty_T}\leq\delta\}$$ and
$$B_S(u_0,T)=\{u\in \dot{X}^{s_k}\cap L^k_xL^\infty_T :
\|u\|_{\dot{X}^{s_k}}\leq\delta\}$$ endowed with the norm
$$\|u\|_{\dot{Y}_T}=\|u\|_{\dot{X}^{s_k}}+\|u\|_{L^k_xL^\infty_T}.$$ Gathering
Propositions \ref{prop-estlin}, \ref{prop-lkli} and
\ref{prop-estnl} (with $s=s_k$) we find that there
exists $C=C(u_0)>1$ such that
\begin{multline*}\|F(u)\|_{\dot{X}^{s_k}}\leq
C\|U(t)u_0\|_{\dot{X}^{s_k}}
+C(1+\|u-u_0\|_{L^k_xL^\infty_T}^{k-1})\|u\|_{\dot{X}^{s_k}}^2\\
+C(\|u_L-u_0\|_{L^k_xL^\infty_T}+\|u-u_0\|_{L^k_xL^\infty_T})(1+\|u-u_0\|_{L^k_xL^\infty_T}^{k-1})\|u\|_{\dot{X}^{s_k}}
\end{multline*}
and
\begin{multline*}\|F(u)-u_0\|_{L^k_xL^\infty_T}\leq
\|U(t)u_0-u_0\|_{L^k_xL^\infty_T}
+C(1+\|u-u_0\|_{L^k_xL^\infty_T}^{k-1})\|u\|_{\dot{X}^{s_k}}^2\\
+C(\|u_L-u_0\|_{L^k_xL^\infty_T}+\|u-u_0\|_{L^k_xL^\infty_T})(1+\|u-u_0\|_{L^k_xL^\infty_T}^{k-1})\|u\|_{\dot{X}^{s_k}}.
\end{multline*}
We can choose $T=T(u_0)$ small enough so that the quantities
$\|U(t)u_0\|_{\dot{X}^{s_k}}$, $\|u_L-u_0\|_{L^k_xL^\infty_T}$ and
$\|U(t)u_0-u_0\|_{L^k_xL^\infty_T}$ are smaller than $\eps=\frac
1{128C^2}$. Thus if $u\in B_M\cap B_S$ , then
$$\|F(u)\|_{\dot{X}^{s_k}}\leq 4C\eps+4C\delta^2$$
and
$$\|F(u)-u_0\|_{L^k_xL^\infty_T}\leq 4C\eps+4C\delta^2.$$
Now we take $\delta=\frac 1{8C}$  so that $F(u)$ belongs to $B_M\cap B_S$.
In the same way, for any $u_1$ and $u_2$ in $B_M\cap B_S$, one has
\begin{align}
\notag \|F(u_1)-F(u_2)\|_{\dot{Y}_T} &\lesssim \|f(u_1)-f(u_2)\|_{\dot{\N}^{s_k,1}}\\
\notag &\lesssim \|u_L-u_1\|_{L^k_xL^\infty_T}(1+\|u_1\|_{L^k_xL^\infty_T}^{k-1})\|u_1-u_2\|_{\dot{X}^{s_k}}\\ \notag &\quad
+\|u_2\|_{\dot{X}^{s_k}}(\|u_1\|_{L^k_xL^\infty_T}^{k-1}+\|u_2\|_{L^k_xL^\infty_T}^{k-1})\|u_1-u_2\|_{L^k_xL^\infty_T}\\ \notag
&\quad +\|u_1\|_{\dot{X}^{s_k}}^2(\|u_1\|_{L^k_xL^\infty_T}^{k-2}+\|u_2\|_{L^k_xL^\infty_T}^{k-2})\|u_1-u_2\|_{L^k_xL^\infty_T}\\ \label{est-uni} &\quad
+\|u_2\|_{L^k_xL^\infty_T}^{k-1}(\|u_1\|_{\dot{X}^{s_k}}+\|u_2\|_{\dot{X}^{s_k}})\|u_1-u_2\|_{\dot{X}^{s_k}}.
\end{align}
Therefore, $$\|F(u_1)-F(u_2)\|_{\dot{Y}_T} \lesssim
(\eps+\delta)\|u_1-u_2\|_{\dot{Y}_T}$$ and for $\eps,\delta$ small
enough, $F : B_M\cap B_S\rightarrow B_M\cap B_S$ is contractive.
There exists a solution $u$ in $B_M\cap B_S$.

The next step is to show that $u\in \C([-T,+T],\dot{H}^{s_k}(\R))$. Using
(\ref{est-lihs}) and Proposition \ref{prop-estnl}, we obtain that
$u\in L^\infty_T\dot{H}^{s_k}_x$. For any $t_1,t_2\in [0,T]$ with
$t_1<t_2$, writing $u(t)$ as
$$u(t)=V(t-t_1)u(t_1)-\int_{t_1}^tV(t-t')\partial_xu^{k+1}(t')dt',$$
we get
\begin{align*}\|u(t_1)-u(t_2)\|_{\dot{H}^{s_k}} &\lesssim
\sup_{t\in[t_1,t_2]}\|u(t)-u(t_1)\|_{\dot{H}^{s_k}}\\
&\lesssim \sup_{t\in[t_1,t_2]}\|u(t_1)-V(t-t_1)u(t_1)\|_{\dot{H}^{s_k}}\\ &\quad+\Big\|\int_{t_1}^tV(t-t')\partial_x u^{k+1}(t')dt'\Big\|_{L^\infty(t_1,t_2;\dot{H}^{s_k})}\\
&\rightarrow 0
\end{align*}
as $t_1\rightarrow t_2$.

Now consider $u_{0,1},u_{0,2}\in \dot{H}^{s_k}$ two initial data, and
$u_1,u_2\in \dot{Z}_T$ satisfying
$$u_1(t)=U_1(t)u_{0,1}-\int_0^tU_1(t-t')f_1(u_1)(t')dt',$$
$$u_2(t)=U_2(t)u_{0,2}-\int_0^tU_2(t-t')f_2(u_2)(t')dt',$$
where $U_j(t)\varphi$ is solution to
$$\partial_tu+\H\partial_x^2u+\pi(V(t)u_{0,j},u)=0,\quad
u(0)=\varphi$$ and $f_j$ is defined by
$$f_j(u)=\pi(V(t)u_{0,j},u)-\pi(u,u)+g(u).$$
We intend to show that there exists a nondecreasing polynomial function $P\geq 1$ such that
\begin{multline}\label{est-diff}\|u_1-u_2\|_{\dot{Z}_T}\lesssim P(\|u_1\|_{\dot{Z}_T}+\|u_2\|_{\dot{Z}_T})\big[\|u_{0,1}-u_{0,2}\|_{\dot{H}^{s_k}}\\
+(\|u_1\|_{\dot{X}^{s_k}}+\|u_2\|_{\dot{X}^{s_k}})\|u_1-u_2\|_{\dot{Z}_T}\big]
\end{multline}
where  the implicit constant in the inequality may depends on $u_{0,1}$, $u_{0,2}$. Obviously, the uniqueness of the solution to (\ref{gBO}) and the fact that
the flow map is locally Lipschitz from $\dot{H}^{s_k}(\R)$ to $\dot{Z}_T$ follow directly from (\ref{est-diff}).

One has
$$\|U_1(t)u_{0,1}-U_2(t)u_{0,2}\|_{\dot{Z}_T}\lesssim
\|U_1(t)(u_{0,1}-u_{0,2})\|_{\dot{Z}_T}+\|(U_1(t)-U_2(t))u_{0,2}\|_{\dot{Z}_T}.$$
The first term in the right-hand side is bounded by
$\|u_{0,1}-u_{0,2}\|_{\dot{H}^{s_k}}$. To treat the second one, we
note that $(U_1(t)-U_2(t))u_{0,2}$ is solution to
$$\partial_tu+\H\partial_x^2u+\pi(V(t)u_{0,1},u)=\pi(V(t)u_{0,1},U_2(t)u_{0,2})-\pi(V(t)u_{0,2},U_2(t)u_{0,2})$$
with zero initial data. Hence by Propositions \ref{prop-estlin} and \ref{prop-lkli}, \begin{align*}\|(U_1(t)-U_2(t))u_{0,2}\|_{\dot{Z}_T}
&\lesssim
\|\pi(V(t)u_{0,1},U_2(t)u_{0,2})-\pi(V(t)u_{0,2},U_2(t)u_{0,2})\|_{\dot{\N}^{s_k,1}}\\
&\lesssim \|u_{0,1}-u_{0,2}\|_{\dot{H}^{s_k}}.
\end{align*}
We also need to bound
\begin{align}\notag &\Big\|\int_0^t(U_1(t-t')f_1(u_1)-U_2(t-t')f_2(u_2))dt'\Big\|_{\dot{Z}_T}\\
\label{est-lip1} & \lesssim
\Big\|\int_0^tU_1(t-t')(f_1(u_1)-f_1(u_2))dt'\Big\|_{\dot{Z}_T}\\
\label{est-lip2}
 &\quad+\Big\|\int_0^tU_1(t-t')(f_1(u_2)-f_2(u_2))dt'\Big\|_{\dot{Z}_T}\\
\label{est-lip3}
&\quad+ \Big\|\int_0^t(U_1(t-t')-U_2(t-t'))f_2(u_2)dt'\Big\|_{\dot{Z}_T}.
\end{align}
(\ref{est-lip1}) is bounded by
$$(\ref{est-lip1}) \lesssim \|f_1(u_1)-f_1(u_2)\|_{\dot\N^{s_k,1}}$$
and we can use (\ref{est-uni}) to get the desired estimate.
Term (\ref{est-lip2}) is bounded by \begin{align*}(\ref{est-lip2})
 &\lesssim
\|\pi(V(t)u_{0,1},u_2)-\pi(V(t)u_{0,2},u_2)\|_{\dot\N^{s_k,1}}\\
&\lesssim
\|u_2\|_{\dot{X}^{s_k}}\|u_{0,1}-u_{0,2}\|_{\dot{H}^{s_k}}.
\end{align*}
Finally, note that $\int_0^t(U_1(t-t')-U_2(t-t'))f_2(u_2)dt'$ is solution to
$$\partial_tu+\H\partial_x^2u+\pi(V(t)u_{0,1},u)=\pi(V(t)u_{0,2},\psi)-\pi(V(t)u_{0,1},\psi),$$
with zero initial data, and where $\psi=\int_0^tU_2(t-t')f_2(u_2)dt'$. It follows that
\begin{align*}(\ref{est-lip3}) &\lesssim
\|\pi(V(t)u_{0,2},\psi)-\pi(V(t)u_{0,1},\psi)\|_{\dot\N^{s_k,1}}\\
&\lesssim \|\psi\|_{\dot{X}^{s_k}} \|u_{0,1}-u_{0,2}\|_{\dot{H}^{s_k}}\\
&\lesssim (\|u_2\|_{\dot{X}^{s_k}}+\|u_2\|_{\dot{X}^{s_k}}^{k+1})\|u_{0,1}-u_{0,2}\|_{\dot{H}^{s_k}}.
\end{align*}

Gathering all these estimates we obtain (\ref{est-diff}).

\subsection{Existence in ${H}^{s}(\R)$, $s\geq s_k$}
Define the spaces $\X=\dot{X}^{0}\cap\dX$ and
$\N^{s,\theta}=\dot{\N}^{0,\theta}\cap\dot{\N}^{s,\theta}$.

We closely follow the proof of Theorem \ref{th-hom}. We show that $F$ is a contraction in the intersection of
$$B_M(u_0,T)=\{u\in {X}^{s}\cap L^k_xL^\infty_T :
\|u-u_0\|_{L^k_xL^\infty_T}\leq\delta\}$$ and
$$B_S(u_0,T)=\{u\in {X}^{s}\cap L^k_xL^\infty_T :
\|u\|_{{X}^{s}}\leq\delta\}$$ endowed with the norm
$$\|u\|_{Y_T}=\|u\|_{{X}^{s}}+\|u\|_{L^k_xL^\infty_T}.$$
Using Propositions \ref{prop-estlin}, \ref{prop-lkli} and
\ref{prop-estnl} (applied with $s\geq s_k$ and $s=0$) and the embedding $\N^{s,1}\hookrightarrow \dot{\N}^{s_k,1}$ for $s\geq s_k$ we find
\begin{multline*}\|F(u)\|_{{X}^{s}}\leq
C\|U(t)u_0\|_{{X}^{s}}
+C(1+\|u-u_0\|_{L^k_xL^\infty_T}^{k-1})\|u\|_{{X}^{s}}^2\\
+C(\|u_L-u_0\|_{L^k_xL^\infty_T}+\|u-u_0\|_{L^k_xL^\infty_T})(1+\|u-u_0\|_{L^k_xL^\infty_T}^{k-1})\|u\|_{{X}^{s}}
\end{multline*}
and
\begin{multline*}\|F(u)-u_0\|_{L^k_xL^\infty_T}\leq
\|U(t)u_0-u_0\|_{L^k_xL^\infty_T}
+C(1+\|u-u_0\|_{L^k_xL^\infty_T}^{k-1})\|u\|_{{X}^{s}}^2\\
+C(\|u_L-u_0\|_{L^k_xL^\infty_T}+\|u-u_0\|_{L^k_xL^\infty_T})(1+\|u-u_0\|_{L^k_xL^\infty_T}^{k-1})\|u\|_{{X}^{s}}.
\end{multline*}
In the same way, one may show that
\begin{align*}
\|F(u_1)-F(u_2)\|_{Y_T} &\lesssim \|f(u_1)-f(u_2)\|_{{\N}^{s,1}}\\
&\lesssim \|u_L-u_1\|_{L^k_xL^\infty_T}(1+\|u_1\|_{L^k_xL^\infty_T}^{k-1})\|u_1-u_2\|_{{X}^{s}}\\ &\quad
+\|u_2\|_{{X}^{s}}(\|u_1\|_{L^k_xL^\infty_T}^{k-1}+\|u_2\|_{L^k_xL^\infty_T}^{k-1})\|u_1-u_2\|_{L^k_xL^\infty_T}\\
&\quad +\|u_1\|_{{X}^{s}}^2(\|u_1\|_{L^k_xL^\infty_T}^{k-2}+\|u_2\|_{L^k_xL^\infty_T}^{k-2})\|u_1-u_2\|_{L^k_xL^\infty_T}\\ &\quad
+\|u_2\|_{L^k_xL^\infty_T}^{k-1}(\|u_1\|_{{X}^{s}}+\|u_2\|_{{X}^{s}})\|u_1-u_2\|_{{X}^{s}}.
\end{align*}
This proves the existence in $H^s(\R)$. The end of the proof is identical to that of Theorem \ref{th-hom}.

\section{Well-posedness for $k=3$}\label{sec-keq3}

Let $k=3$ and $s>1/3$ be fixed.

The scheme of the proof is the same as for the case $k\geq 4$ with minor modifications. First, in view of Lemma \ref{lem-linl3},
it is clear that Lemma \ref{lemsmall} holds for $k=3$ with $u_0\in \dot{H}^{s_k}$ replaced by $u_0\in H^s$. Next we see that the
$\B^{\frac{3\eps}{4},2}_{(\frac{1}{k}-\frac{\eps}{4})^{-1}}(L^{\frac 2\eps}_T)$ -norm which appears in Proposition \ref{prop-estnl} when
estimating the nonlinear term $g$ is not bounded by the $\dot{S}^{\eps,1}$-norm for $k=3$. So we modify slightly the space $X^s$ by setting
$$X^s=\dot{X}^0\cap \dot{X}^s\cap \B^{\eps,2}_3(L^{\frac{2}{\eps}}_T).$$
On one hand, it is clear from Sobolev inequalities that
$$\|u\|_{\B^{\frac{3\eps}{4},2}_{(\frac{1}{3}-\frac{\eps}{4})^{-1}}(L^{\frac 2\eps}_T)} \lesssim \|u\|_{\B^{\eps,2}_3(L^{\frac{2}{\eps}}_T)}
\lesssim \|u\|_{X^s}.$$
On the other hand, the $\B^{\eps,2}_3(L^{\frac{2}{\eps}}_T)$-norm is acceptable since by (\ref{est3}),
\begin{multline*}\|V(t)\varphi\|_{\B^{\eps,2}_3(L^{\frac{2}{\eps}}_T)}\lesssim \Big(\sum_j4^{j\eps}\|Q_jV(t)\varphi\|_{L^3_xL^\infty_T}^2\Big)^{1/2}\\
\lesssim \Big(\sum_j\|Q_j\varphi\|_{H^{1/3+2\eps}}^2\Big)^{1/2}\lesssim \|\varphi\|_{H^s}\end{multline*}
for $\eps\ll 1$. From this, it is straightforward to check that the subcritical non-homogeneous versions of Propositions \ref{prop-estlin}, \ref{prop-lkli} and
\ref{prop-estnl} are valid whenever $k=3$. This essentially proves Theorem \ref{th-keq3}.

\section*{Acknowledgments}
The author wants to thank Fabrice Planchon for his enthusiastic help and his availability.

\bibliographystyle{plain}
\bibliography{ref}
\end{document}